\title{Presenting with Quantitative Inequational Theories}
\author{Todd Schmid\footnote{This work was partially supported by ERC grant Autoprobe (grant agreement 101002697).}}
\affil{University College London}
\date{}
\begin{document}

\theoremstyle{plain}
\newtheorem{theorem}{Theorem}
\newtheorem{lemma}[theorem]{Lemma}
\newtheorem{corollary}[theorem]{Corollary}
\newtheorem{proposition}[theorem]{Proposition}

\theoremstyle{definition}
\newtheorem{remark}[theorem]{Remark}
\newtheorem{definition}[theorem]{Definition}
\newtheorem{conjecture}[theorem]{Conjecture}
\newtheorem{example}[theorem]{Example}
\newtheorem{assumption}{Assumption}
\newtheorem{question}{Question}

\tikzset{every state/.style={minimum size=0pt}}
\tikzset{every node/.style={scale=1}}

\makeatletter
\newcommand*{\da@rightarrow}{\mathchar"0\hexnumber@\symAMSa 4B }
\newcommand*{\da@leftarrow}{\mathchar"0\hexnumber@\symAMSa 4C }
\newcommand*{\xdashrightarrow}[2][]{%
  \mathrel{%
    \mathpalette{\da@xarrow{#1}{#2}{}\da@rightarrow{\,}{}}{}%
  }%
}
\newcommand*{\da@xarrow}[7]{%
  \sbox0{$\ifx#7\scriptstyle\scriptscriptstyle\else\scriptstyle\fi#5#1#6\m@th$}%
  \sbox2{$\ifx#7\scriptstyle\scriptscriptstyle\else\scriptstyle\fi#5#2#6\m@th$}%
  \sbox4{$#7\dabar@\m@th$}%
  \dimen@=\wd0 %
  \ifdim\wd2 >\dimen@
    \dimen@=\wd2 %
  \fi
  \count@=2 %
  \def\da@bars{\dabar@\dabar@}%
  \@whiledim\count@\wd4<\dimen@\do{%
    \advance\count@\@ne
    \expandafter\def\expandafter\da@bars\expandafter{%
      \da@bars
      \dabar@ 
    }%
  }%
  \mathrel{#3}%
  \mathrel{%
    \mathop{\da@bars}\limits
    \ifx\\#1\\%
    \else
      _{\copy0}%
    \fi
    \ifx\\#2\\%
    \else
      ^{\copy2}%
    \fi
  }%
  \mathrel{#4}%
}
\makeatother

\newcommand 	\sle 		{\sqsubseteq} 		
\newcommand 	\sge 		{\sqsupseteq}		

\newcommand 	\At  		{A} 		
\newcommand 	\Eq 		{\mathscr{E}}		
\newcommand 	\Ie 		{\mathscr{I}} 		
\newcommand 	\Exp 		{\mathtt{Exp}} 		
\newcommand 	\Expm 		{{\Exp}/{\equiv}}	
\newcommand 	\Expi 		{{\Exp/{\sle}}}		
\newcommand 	\SExp 		{\mathtt{SExp}} 	
\newcommand 	\acro[1]	{\(\mathsf{#1}\)} 				

\newcommand 	\Sets		{\mathsf{Sets}}				
\newcommand 	\Met 		{\mathbs{Met}} 				
\newcommand 	\Pos		{\mathsf{Pos}} 				
\newcommand  	\dCPO 		{\mathsf{dCPO}}				
\newcommand 	\dcpo  		{{\textsf{dcpo}}}
\newcommand 	\Cat 		{\mathsf{C}}
\newcommand 	\Alg 		{\operatorname{Alg}}
\newcommand 	\Coalg 		{\operatorname{Coalg}}  	

\newcommand   \PMod[1]   {\mathbf {#1}\mathsf{Mod}}

\newcommand 	\Id 		{\text{Id}}					
\renewcommand 	\P			{\mathcal{P}_{\omega}}		
\newcommand 	\M			{\mathcal{M}_{\omega}}	
\newcommand 	\N 			{\mathbb N}				
\newcommand 	\D 			{\mathcal{S}}	
\newcommand 	\C			{\mathcal{C}}			
\renewcommand 	\O			{\mathcal{O}}			
\newcommand 	\A			{\mathcal{A}_\omega}			
\newcommand 	\PA 		{\mathsf{PA}}

\newcommand 	\E 			{{\mathtt{Prc}}}
\newcommand 	\X 			{{\mathbb X}}
\newcommand 	\Y 			{{\mathbb Y}}

\newcommand 	\id 			{\operatorname{id}}			
\newcommand 	\supp 			{\operatorname{supp}}		
\newcommand 	\conv 			{\operatorname{conv}}	
\newcommand 	\fv 			{\operatorname{fv}}	
\newcommand 	\bv 			{\operatorname{bv}}		
\newcommand 	\downset 		{\operatorname{\downarrow}}
\newcommand 	\Pfp 			{\operatorname{Pfp}}
\newcommand 	\lfp 			{\operatorname{lfp}}
\newcommand 	\fp 			{\operatorname{fp}}
\newcommand 	\disc  			{\operatorname{Dis}}
\newcommand 	\forg[1] 		{\left|#1\right|}

\newcommand 	\sub 			{\mathbin{/}}
\newcommand 	\gdsub			{\mathbin{/\!/}}
\newcommand 	\unsub 			{\raisebox{-0.3em}{
	\begin{tikzpicture}
		\draw[thick, dotted] (0,-0.2) -- (1ex, 1.2ex);
	\end{tikzpicture}
}}

\newcommand 	\upper 		{\operatorname{\uparrow}}
\newcommand 	\lhdeq  	{\unlhd}

\newcommand 	\qm 		{\mathbin{?}}

\newcommand 	\tr[1] 		{\mathrel{\raisebox{-0.1em}{{\footnotesize\(\xrightarrow{#1}\)}}}}
\newcommand 	\out[1] 	{\mathrel{\raisebox{-0.1em}{{\footnotesize\(\xRightarrow{#1}\)}}}}
\newcommand 	\trd[1] 	{\mathrel{\raisebox{-0.1em}{{\footnotesize\(\xdashrightarrow{#1}{}\)}}}}

\newcommand 	\Act 		{\mathtt{Act}} 		
\newcommand 	\Var 		{\mathtt{Var}}		
\newcommand 	\code[1] 	{\mathtt{#1}}

\newcommand 	\sem[1] 	{\lceil \!\! \lfloor #1 \rceil \!\! \rfloor}
\newcommand 	\asem[1] 	{\lceil \!\! \lceil #1 \rceil \!\! \rceil}
\newcommand 	\ev 		{\operatorname{ev}}
\newcommand 	\beh   		{{!}}
\newcommand 	\gd			{\mathsf{gd}} 				
\newcommand 	\skiptt  	{\code{1}} 			
\newcommand 	\failtt 	{\code{0}}
\newcommand 	\unit		{\underline{\code{u}}}
\newcommand 	\eff  		{\code{f}}

\newcommand 	\defn[1] 	{\emph{#1}}
\newcommand 	\note[1]  	{{\color{red} #1}}
	
	\maketitle
	
    \begin{abstract}
        It came to the attention of myself and the coauthors of~\cite{schmidrozowskisilvarot2022processes} that a number of process calculi can be obtained by algebraically presenting the branching structure of the transition systems they specify. 
        For example, labelled transition systems branch into finite sets of transitions representable by terms in the free semilattice generated by the transitions and probabilistic labelled transition systems branch into finitely supported probability distributions representable by terms in the free convex algebra generated by the transitions~\cite{bonchi_silva_sokolova:2017,flood1981semiconvex,schmidrozowskisilvarot2022processes}.
    
        Since the mentioned theories are equational, their presentations are given in terms of monads on the category of sets, but restricting branching structures to set-valued monads has a number of undesirable limitations.
        In~\cite{schmid2022ordered}, I show how to extend the framework of \cite{schmidrozowskisilvarot2022processes} to ordered branching structures given by monads on the category of partially ordered sets.
        Unfortunately, ordered analogues of a few important examples of monad presentations seem to be missing from the literature, despite their relevance to process theory beyond labelled transition systems. 
        My goal in this article is to initiate a line of research that addresses this gap in a way that lends itself well to studying behavioural inequalities of weighted automata and related models.

        In this article, I discuss examples of monads on the category of partially ordered sets coming from quantitative theories, free modules over ordered semirings, and give sufficient conditions for one of these to lift a monad on the category of sets. 
        I also discuss monad presentations in general.
        In the last section, I give descriptions of ordered semirings that are useful for specifying unguarded recursive calls. 
        Applications include ordered probability theory and ordered semilattices. 
    \end{abstract}

	\section{Monad presentations}
    Fix an endofunctor \(S : \Cat \to \Cat\) on a category \(\Cat\). 
    An (\(S\)-)\defn{algebra} is a pair \((X, \alpha)\) consisting of an object \(X\) of \(\Cat\) and an arrow \(\alpha : SX \to X\). 
    A \defn{homomorphism} \(h : (X, \alpha) \to (Y, \beta)\) is an arrow \(h : X \to Y\) in \(\Cat\) such that \(S(h) \circ \alpha = \beta \circ h\).
    \(\Alg_\Cat(S)\) denotes the category of \(S\)-algebras and homomorphisms.

    Intuitively, a monad is a free-algebra construction that takes some desirable algebraic properties as input and outputs the initial algebra satisfying those properties.
    In the most general picture, a property is simply a full subcategory of \(\Alg(S)\). 
    The following notion of monad presentation is inspired by~\cite{bonchiSV2019tracesfor} (see also~\cite[IV.\S8]{maclane1971working}).

    \begin{definition}
        Let \(\mathsf T\) be a full subcategory of \(\Alg_\Cat(S)\).
        A \defn{\(\mathsf T\)-presented monad} is a triple \((M, \eta, \rho)\) consisting of the following ingredients: (i) An endofunctor \(M : \Cat \to \Cat\), (ii) a natural transformation \(\eta : \Id \Rightarrow M\), and (iii) a natural transformation \(\rho : SM \Rightarrow M\) such that 
            \begin{enumerate}
                \item[(a)] \((MX, \rho_X) \in \mathsf T\) for any object \(X\) of \(\Cat\), and
                \item[(b)] for any \(S\)-algebra \((Y, \beta)\) and any arrow \(f : X \to Y\) in \(\Cat\), there is a unique \(S\)-algebra homomorphism \(f^\beta : (MX, \rho_X) \to (Y, \beta)\) such that \(f^\beta \circ \eta = f\).
            \end{enumerate}
    \end{definition}

    This notion of monad is not entirely standard. 
    To see its connection to the usual notion, we turn to the following two results. 

    \begin{restatable}{theorem}{wegetmonads}
        Every \(\mathsf T\)-presented monad \((M, \eta, \rho)\) corresponds to a unique monad \((M, \eta, \mu)\) on \(\Cat\). 
    \end{restatable}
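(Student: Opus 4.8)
The plan is to read condition~(b) as saying that, for each object $X$, the $S$-algebra $(MX, \rho_X)$ together with $\eta_X : X \to MX$ is the \emph{free} $S$-algebra on $X$; equivalently, $M$ is the object part of a left adjoint to the forgetful functor $\Alg_\Cat(S) \to \Cat$, and the monad I want is the one this adjunction induces in the standard way. The one tool I will use throughout is the uniqueness half of~(b), packaged as follows: if $g, h : (MZ, \rho_Z) \to (Y, \beta)$ are $S$-algebra homomorphisms with $g \circ \eta_Z = h \circ \eta_Z$, then $g = h$, since each equals $(g \circ \eta_Z)^\beta$. Two easy preliminaries feed this tool. First, naturality of $\rho$ is precisely the assertion that $Mg : (MX, \rho_X) \to (MY, \rho_Y)$ is an $S$-algebra homomorphism for every $g : X \to Y$. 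Second, composites of $S$-algebra homomorphisms are again homomorphisms. (Note that condition~(a) is not actually needed below: every target algebra I use is of the form $(MZ, \rho_Z)$, and~(b) applies to an arbitrary $S$-algebra as target.)

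I then define the multiplication by applying~(b) with base object $MX$, target algebra $(MX, \rho_X)$, and $f = \id_{MX}$: this yields the unique homomorphism $\mu_X := (\id_{MX})^{\rho_X} : (MMX, \rho_{MX}) \to (MX, \rho_X)$ satisfying $\mu_X \circ \eta_{MX} = \id_{MX}$, which is already the right unit law. Naturality of $\mu$ follows from the tool: for $g : X \to Y$, both $\mu_Y \circ MMg$ and $Mg \circ \mu_X$ are homomorphisms $(MMX, \rho_{MX}) \to (MY, \rho_Y)$ out of the free algebra on $MX$, and precomposing each with $\eta_{MX}$ and using naturality of $\eta$ at $Mg$ together with the right unit law reduces both to $Mg$. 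The left unit law is the same argument one level down: $\mu_X \circ M\eta_X$ and $\id_{MX}$ are homomorphisms $(MX, \rho_X) \to (MX, \rho_X)$ that agree after precomposition with $\eta_X$, once one rewrites $M\eta_X \circ \eta_X = \eta_{MX} \circ \eta_X$ by naturality of $\eta$.

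For associativity I compare $\mu_X \circ \mu_{MX}$ and $\mu_X \circ M\mu_X$, both homomorphisms $(MMMX, \rho_{MMX}) \to (MX, \rho_X)$ out of the free algebra on $MMX$. Precomposing with $\eta_{MMX}$ turns the left-hand side into $\mu_X$ via the right unit law at $MX$, and the right-hand side into $\mu_X$ via naturality of $\eta$ at $\mu_X$ followed by the right unit law; the tool then gives equality. Finally, the monad is unique because $\mu$ carries no freedom: its components are forced to be homomorphisms extending the identity, and~(b) has exactly one such solution, namely $(\id_{MX})^{\rho_X}$. I expect no conceptual obstacle here---the argument is the usual adjunction-to-monad passage---only the bookkeeping of keeping track of which free algebra each composite issues from and of inserting the naturality squares for $\eta$ at the right places, with associativity being the most involved of these otherwise routine checks.
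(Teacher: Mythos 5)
Your proposal is correct and follows essentially the same route as the paper: you define \(\mu_X\) as the unique homomorphic extension \((\id_{MX})^{\rho_X}\) of the identity and derive the unit, associativity, and naturality laws from the uniqueness clause of condition (b), which is exactly the verification the paper declares ``routine'' (you carry it out explicitly, and your uniqueness remark matches the paper's observation that any corresponding monad's multiplication, being a homomorphism satisfying the right unit law, is forced to coincide with \((\id_{MX})^{\rho_X}\)).
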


    \begin{proof}
        Consider the identity arrow \(\id_{MX} : MX \to MX\). 
        We define the natural transformation \(\mu : MM \Rightarrow M\) to have the components \(\mu_X = \id_{MX}^{\rho_{MX}}\), where \(\id_{MX}^{\rho_{MX}}\) is the unique \(S\)-algebra homomorphism \((MMX, \rho_{MX}) \to (MX, \rho_X)\) such that \(\id_{MX}^{\rho_{MX}} \circ \eta = \id_{MX}\).
        It is now routine to check that \((M, \eta, \mu)\) is a monad. 
        
        Given an arbitrary monad \((M, \eta, \mu)\), one simply needs to verify that \(\mu_{X} : MMX \to MX\) is an \(S\)-algebra homomorphism \((MMX, \rho_{MX}) \to (MX, \rho_X)\). 
    \end{proof}

    \begin{theorem}\label{thm:T-pres equiv to EM-algs}
        The functor \(\Phi : (X, \alpha) \mapsto (X, \id_X^\alpha)\) is a functor between \(\mathsf T\) and the category of Eillenberg-Moore algebras \(\operatorname{EM}(M, \eta, \mu)\), where \(\mu_{X} = \id_{MX}^{\rho_{MX}}\).
        If \(\mathsf T\) is closed under split epis (i.e. if \((X, \alpha) \in \mathsf T\) and \((\exists m,i)~m : (X, \alpha) \to (Y, \beta)\) and \(m \circ i = \id_Y\), then \((Y, \beta) \in \mathsf T\)), then \(\Phi\) is an isomorphism of categories. 
    \end{theorem}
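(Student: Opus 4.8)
The plan is to construct an inverse functor $\Psi$ and check that it is mutually inverse with $\Phi$, but first I would verify that $\Phi$ is well defined and functorial, which needs no closure hypothesis. For an object $(X,\alpha)$ of $\mathsf T$ the arrow $\id_X^\alpha : MX \to X$ satisfies $\id_X^\alpha \circ \eta_X = \id_X$ by definition, which is the unit law for an Eilenberg--Moore algebra; the associativity law $\id_X^\alpha \circ \mu_X = \id_X^\alpha \circ M\id_X^\alpha$ I would obtain from the uniqueness clause (b). Indeed, both composites are $S$-algebra homomorphisms $(MMX,\rho_{MX}) \to (X,\alpha)$ --- using that $\mu_X$ and, by naturality of $\rho$, $M\id_X^\alpha$ are homomorphisms into $(MX,\rho_X)$ --- and both restrict along $\eta_{MX}$ to $\id_X^\alpha$ (on one side via $\mu_X \circ \eta_{MX} = \id_{MX}$, on the other via naturality of $\eta$ and the unit law), so they coincide. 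The same uniqueness template shows that an $S$-algebra homomorphism $h : (X,\alpha) \to (Y,\beta)$ is automatically an Eilenberg--Moore homomorphism $(X,\id_X^\alpha) \to (Y,\id_Y^\beta)$, which gives functoriality of $\Phi$.

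For the inverse, given an Eilenberg--Moore algebra $(X,a)$ I would set $\alpha := a \circ \rho_X \circ S\eta_X$ and put $\Psi(X,a) := (X,\alpha)$, with $\Psi$ the identity on underlying arrows. The key lemma, and the main obstacle, is that $a$ is then an $S$-algebra homomorphism $(MX,\rho_X) \to (X,\alpha)$, i.e. $a \circ \rho_X = a \circ \rho_X \circ S(\eta_X \circ a)$. I would prove this by the chain
\begin{align*}
  a \circ \rho_X \circ S(\eta_X \circ a)
  &= a \circ \rho_X \circ SMa \circ S\eta_{MX}
   = a \circ Ma \circ \rho_{MX} \circ S\eta_{MX} \\
  &= a \circ \mu_X \circ \rho_{MX} \circ S\eta_{MX}
   = a \circ \rho_X \circ S\mu_X \circ S\eta_{MX}
   = a \circ \rho_X,
\end{align*}
where the first equality uses naturality of $\eta$ (so $\eta_X \circ a = Ma \circ \eta_{MX}$), the second naturality of $\rho$, the third the Eilenberg--Moore associativity law $a \circ Ma = a \circ \mu_X$, the fourth that $\mu_X$ is an $S$-algebra homomorphism, and the last that $\mu_X \circ \eta_{MX} = \id_{MX}$. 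This is the step where the associativity law, the naturality of $\eta$ and $\rho$, and the defining property of $\mu$ all have to conspire, so it is where I would spend the most care.

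Granting the lemma, $a \circ \eta_X = \id_X$ exhibits $a : (MX,\rho_X) \to (X,\alpha)$ as a split epimorphism with section $\eta_X$ (read as an arrow of $\Cat$); since $(MX,\rho_X) \in \mathsf T$, closure under split epis yields $(X,\alpha) \in \mathsf T$, so $\Psi$ does land in $\mathsf T$. By the uniqueness in (b), the fact that $a$ is a homomorphism with $a \circ \eta_X = \id_X$ forces $\id_X^\alpha = a$, so $\Phi\Psi = \id$ on objects. Conversely, for $(X,\alpha) \in \mathsf T$ the homomorphism property of $\id_X^\alpha$ together with $\id_X^\alpha \circ \eta_X = \id_X$ gives the recovery identity $\id_X^\alpha \circ \rho_X \circ S\eta_X = \alpha$, so $\Psi\Phi = \id$ on objects. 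Finally, this recovery identity combined with the naturality computation above shows that any Eilenberg--Moore homomorphism between images of $\Phi$ is an $S$-algebra homomorphism, so $\Phi$ is full; it is faithful because it fixes underlying arrows. A fully faithful functor that is bijective on objects is an isomorphism of categories, which completes the argument.
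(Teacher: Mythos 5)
Your proof is correct and follows essentially the same strategy as the paper: the same construction of the inverse \((X,a)\mapsto(X,\,a\circ\rho_X\circ S\eta_X)\), the same appeal to closure under split epis using \(\eta_X\) as a section of \(a\), and the same uniqueness-of-\(f^\beta\) arguments for the associativity law and for \(\Phi\Psi=\id\), \(\Psi\Phi=\id\). The one substantive difference is to your credit: you derive the key identity \(\mu_X\circ\rho_{MX}\circ S\eta_{MX}=\rho_X\) correctly from \(\mu_X\) being an \(S\)-algebra homomorphism together with \(\mu_X\circ\eta_{MX}=\id_{MX}\), whereas the paper attributes the intermediate equation \(\rho_{MX}\circ S\eta_{MX}=\eta_{MX}\circ\rho_X\) to ``naturality'' even though that equation (which would make \(\eta_{MX}\) an \(S\)-algebra homomorphism) is false in general and only becomes true after postcomposing with \(\mu_X\); you also spell out the action on morphisms (functoriality, fullness, faithfulness), which the paper leaves implicit.
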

    
    \begin{proof}
        By assumption, \((X, \beta) \in \mathsf T\), so the universal property of \(\rho\) applies to it.
        We start by verifying that \((X, \id_X^\beta)\) is indeed an Eillenberg-Moore algebra.
        By naturality of \(\eta\) and \(\rho\), the following diagram commutes. 
        \[\begin{tikzcd}
            SMMX \ar[r] \ar[d, "\rho_M"] 
            & SMX \ar[r] \ar[d, "\rho"] 
            & SX \ar[d, "\beta"] \\
            MMX \ar[r, "M(\id_X^\beta)"] 
            & MX \ar[r, "\id_X^\beta"] 
            & X \\
            MX \ar[u, "\eta"] \ar[r, "\id_X^\beta"] 
            & X \ar[u, "\eta"] \ar[ur, "\id_X"']
        \end{tikzcd} \]
         By the universal property of \(\rho\), \(\id_X^\beta \circ M(\id_X^\beta)\) is the unique \(S\)-algebra homomorphism satisfying \[(\id_X^\beta \circ M(\id_X^\beta)) \circ \eta = \id_X^\beta\]
         On the other hand, the following commutes because \(\mu \circ \eta_M = \id_M\). 
         \[\begin{tikzcd}
             SMMX \ar[r] \ar[d, "\rho_M"] 
             & SMX \ar[r] \ar[d, "\rho"] 
             & SX \ar[d, "\beta"] \\
             MMX \ar[r, "\mu"] 
             & MX \ar[r, "\id_X^\beta"] 
             & X \\
             MX \ar[u, "\eta_M"] \ar[ur, "\id_{MX}"']
         \end{tikzcd} \]
        This means that \(\id_X^\beta \circ \mu_M\) is also an \(S\)-algebra homomorphism satisfying \((\id_X^\beta \circ \mu_M) \circ \eta = \id_X^\beta\).
        It follows that \(\id_X^\beta \circ \mu_M = \id_X^\beta \circ M(\id_X^\beta)\).
        Since \(\id_X^\beta \circ \eta = \id_X\) by definition, \((X, \id_X^\beta)\) is an EM-algebra for \((M, \eta, \mu)\).
        This establishes that \(\Phi\) is indeed a functor into \(\operatorname{EM}(M, \eta, \mu)\). 
        
        Now we construct the inverse of \(\Phi\).
        Let \(\Theta(X, \gamma) = (X, \gamma \circ \rho \circ S(\eta))\). 	
        We will show that \((X, \gamma \circ \rho \circ S(\eta)) \in \mathsf T\) by exhibiting a split epi \(MX \to X\). 
        In fact, the split epi is \(\gamma\), since \(\gamma \circ \eta = \id_X\), so we just need to show that \(\gamma\) is an \(S\)-algebra homomorphism.
        This can be seen from the following diagram
        \[\begin{tikzcd}
            SMX \ar[d, "S(\gamma)"] \ar[r, "S(\eta_M)"] 
            & SMMX \ar[d, "SM(\gamma)"] \ar[r, "\rho_M"]
            & MMX \ar[r, "\mu"] \ar[d, "M(\gamma)"]
            & MX \ar[d, "\gamma"]\\
            SX \ar[r, "S(\eta)"] 
            & SMX \ar[r, "\rho"] 
            & MX \ar[r, "\gamma"] 
            & X	\end{tikzcd}\]
        This diagram commutes by naturality of \(\mu\), \(\rho\), \(S(\eta)\), and the fact that \((X,\gamma)\) is am EM-algebra. 
        We furthermore know that 
        \begin{align*}
            \mu \circ \rho_M \circ S(\eta) 
            &= \mu \circ \eta_M \circ \rho_M \tag{naturality} \\
            &= \id_M \circ \rho_M \tag{def. of monad} \\
            &= \rho
        \end{align*}
        Hence, \(\Theta : \operatorname{EM}(M,\eta, \mu) \to \mathsf T\).
        Furthermore, \(\gamma\) is the unique \(S\)-algebra homomorphism \((MX, \rho) \to \Theta(X, \gamma)\) such that \(\gamma \circ \eta = \id_X\), so in fact \(\Phi\Theta(X, \gamma) = (X, \gamma)\). 
        It therefore suffices to see that \(\Theta\Phi(X, \beta) = (X, \beta)\).
        This is rather easy, fortunately:
        \begin{align*}
            \id_X^\beta \circ \rho \circ S(\eta) 
            = \beta \circ S(\id_X^\beta) \circ S(\eta) 
            = \beta \circ S(\id_X^\beta \circ \eta) 
            = \beta \circ S(\id_X)  
            = \beta \tag*{\qedhere}
        \end{align*}
    \end{proof}

    \begin{remark}
        The assumption that \(\mathsf T\) is closed under split epis is not very strict. 
        For example, every variety is closed under split epis, because every split epi is automatically regular~\cite{adamek1990abstract,adamekP03varieties}. 
    \end{remark}
    
    \section{Quantitative theories}
    Discrete processes that branch with observable quantities are popular in the automata theory and process algebra literature~\cite{stark1999complete,andova1999process,bonchi_silva_sokolova:2017,bonchibonsangueborealeruttensilva2012weighted}.
    In the cited works, formal calculi are used for specifying and reasoning about behaviours of such processes.
    Algebraically reasoning about behaviour inevitably relies on an algebraic characterisation of the branching structures of processes.

    Quantitative branching is often depicted by decorating transitions with numbers, vectors, or other quantities. 
    Generally, quantities appear as elements of a \emph{semiring} \(\mathbf P = (P, 0, 1, +, \cdot)\), a set \(P\) equipped with constants \(0,1\) and binary operations \(+,\cdot\) such that \((P, 0, +)\) is a commutative monoid, \((P, 1, \cdot)\) is a monoid, \(0r = 0\), \(r(s + t) = rs + rt\), and \((s + t)r = sr + tr\) for all \(r,s,t \in P\)~\cite{golan2013semirings}.

    \begin{definition}
        A \emph{\(\mathbf P\)-module} is a monoid \((X, 0, +)\) equipped with an action \(P \times X \to X\), written \((r, x) \mapsto rx\), such that \(0x = 0\), \(1x = x\), \((rs)x = r(sx)\), and \((r + s)x = rx + sx\) for \(x \in X\) and \(r,s \in P\).
    \end{definition}
    
    Abstractly, \(\mathbf P\)-modules are algebras for \(S = P \times \Id + \{+\}\times \Id^2\) that satisfy a handful of equations, where \(\Id\) is the identity on \(\Sets\). 
    They form a full subcategory \(\PMod{P}\) of \(\Alg_\Sets(S)\).
    The free \(\mathbf P\)-module on \(X\) consists of finitely supported \(\theta : X \to P\), where the \emph{support} of \(\theta\) is \(\supp(\theta) = \{x \in X \mid \theta(x) \neq 0\}\).

    \begin{definition}
        The \emph{free \(\mathbf P\)-module} functor \(\O_{\mathbf P} : \Sets \to \Sets\) is given by
        \[
            \O_{\mathbf P} X = {\{\theta : X \to P \mid \supp(\theta)\text{ is finite}\}}
            \qquad \qquad
            \O_{\mathbf P}(h)(\theta)(y) = h^\bullet\theta(y) := \sum_{h(x) = y} \theta(x)
        \] 
        for any set \(X\) and any \(h : X \to Y\).
        The \(\PMod{P}\)-presented monad is the triple \((\O_{\mathbf P}, \delta, \rho)\), defined
        \[\delta_X(x) = \delta_x := \lambda y.\begin{cases}
            1 & x = y \\
            0 & x \neq y
        \end{cases}
        \qquad 
        \rho(0) = 0
        \qquad 
        \rho(r, \theta) = r \theta
        \qquad 
        \rho(+, \theta_1, \theta_2) = \theta_1 + \theta_2
        \]
        where \(0(x) = 0\), and \(r\theta\) and \(\theta_1 + \theta_2\) are evaluated pointwise.
    \end{definition}

    \begin{example}\label{eg:semilattices}
        Consider \(\mathbf 2 = (\{0,1\}, 0, 1, \max, \min)\).
        \(\PMod{2}\) is equivalent to the category of join semilattices with bottom.
        In particular, \(\O_{\mathbf 2}\) is naturally isomorphic to the finitary powerset functor \(\P\).
    \end{example}

    \begin{example}\label{eg:reals}
        Consider \(\mathbf R^+ = (\mathbb R_{\ge 0}, 0, 1, +, \times)\).
        \(\PMod{R^+}\) is equivalent to the category of positive cones of finite dimensional real-valued metric spaces and linear maps with nonnegative entries.
    \end{example}

    \section{Ordered quantitative theories} 
    A notable example of the power of algebraic presentation for the purposes of reasoning about behaviour is Stark and Smolka's algebra of probabilistic actions~\cite{stark1999complete}, which is used to study processes that branch probabilistically, with weights from the semiring \(\mathbf R^+\).  
    An important feature of their calculus is its interpretation of recursive specifications as least fixed-points. 

    Recently~\cite{schmid2022ordered}, I made that the observation that the existence of these least fixed-points is a property of the \emph{inequational theory} obtained from the theory of \(\mathbf R^+\)-modules. 
    Where \(\Pos\) is the category of partially ordered sets (posets) and monotone maps, an inequational theory is a set of inequalities that describes a category of \(S\)-algebras for a functor \(S : \Pos \to \Pos\), or \emph{ordered algebras}.

    \begin{definition}
        An \emph{ordered semiring} \(\mathbf P = (P, \le, 0, 1, +, \cdot)\) is a semiring \((P, 0, 1, +, \cdot)\) where \((P, \le)\) is a poset with \(0\) as its bottom element\footnote{These are sometimes called \emph{positive} ordered semirings in the literature~\cite{golan2013semirings}.}, and \(+\) and \(\cdot\) are monotone.
        An \emph{ordered \(\mathbf P\)-module} is a sturcture \((X, \le, 0, +)\) consisting of a \(\mathbf P\)-module \((X, 0,  +)\) such that \(r \le s\) and \(x \le y\) implies \(rx + z \le sy + z\) for any \(r,s \in P\) and \(x,y,z \in X\).
    \end{definition}

    Abstractly, an ordered \(\mathbf P\)-module is an algebra for \(S = (P, \le) \times \Id + (\{+\}, =)\times \Id^2\) satisfying a set of inequations, where \(\Id\) is the identity functor on \(\Pos\) now. 
    We also write \(\PMod{\mathbf P}\) for the full subcategory of \(\Alg_\Pos(S)\) consisting of ordered \(\mathbf P\)-modules. 

    If the reader is anything like myself, they might expect me to say that \(\O_{\mathbf P}\) constructs the free ordered \(\mathbf P\)-module on a poset \((X, \le)\).
    This is not exactly the case.
    For example, turn the semiring \(\mathbf 2\) from \cref{eg:semilattices} into an ordered semiring by taking \(0 < 1\). 
    Then an ordered \(\mathbf 2\)-module consists of the same data as a semilattice with bottom, in the sense of order theory~\cite{birkhoff1940lattice}. 
    The free ordered semilattice on a poset \((X, \le)\) is carried by the finitely generated downwards-closed subsets of \((X, \le)\), which is not even the same size as \(\P X\) in general!
    
    In many cases, quotienting the set \(\O_{\mathbf P} X\) by a certain preorder suffices.
    Call a subset \(U \subseteq X\) \emph{upwards closed} if \(x \in U\) and \(x \le y\) implies \(y \in U\). 
    
    \begin{definition}
        The \emph{heavier-higher order} is the preorder \(\sqsubseteq\) on \(\O_{\mathbf P} X\) defined so that \(\theta_1 \sqsubseteq \theta_2\) if and only if for any upwards-closed \(U \subseteq X\), \(\sum_{x \in U} \theta_1 (x) \le \sum_{x \in U} \theta_2(x)\) (see for e.g.~\cite{shaked1994stochastic}).
    \end{definition}
    
    We write \(\theta_1 \equiv \theta_2\) if \(\theta_1 \sqsubseteq \theta_2 \sqsubseteq \theta_1\), \([\theta_1] = \{\theta_2 \mid \theta_1 \equiv \theta_2\}\), and \(\bar\O_{\mathbf P} (X, \le) = (\O_{\mathbf P} X/{\equiv}, \sqsubseteq)\).    
    The next lemma tells us that \(\bar\O_{\mathbf P}\) is an endofunctor on \(\Pos\). 
    
    \begin{lemma}
        For any monotone map \(f : (X, \le) \to (Y, \le)\), the map \(f^\bullet : \mathcal O_\omega X \to \mathcal O_\omega Y\) is monotone w.r.t. the heavier-higher order. 
    \end{lemma}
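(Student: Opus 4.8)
The plan is to unwind the definition of the heavier-higher order and reduce the claim to a single order-theoretic observation about preimages. Fix weights with \(\theta_1 \sle \theta_2\) in \(\O_{\mathbf P} X\); I must show \(f^\bullet \theta_1 \sle f^\bullet \theta_2\), that is, \(\sum_{y \in V} (f^\bullet\theta_1)(y) \le \sum_{y \in V}(f^\bullet\theta_2)(y)\) for every upwards-closed \(V \subseteq Y\). Because each \(\theta_i\) has finite support, every sum appearing below is a finite sum in the commutative monoid \((P, 0, +)\), so I am free to reindex and regroup terms at will.

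First I would compute, for \(i \in \{1,2\}\) and any \(V \subseteq Y\),
\[
    \sum_{y \in V} (f^\bullet \theta_i)(y) = \sum_{y \in V} \sum_{f(x) = y} \theta_i(x) = \sum_{x \in f^{-1}(V)} \theta_i(x),
\]
where the first equality is the definition of \(f^\bullet\) and the second is a reindexing of a finite sum that simply groups the indices \(x\) according to the value \(f(x)\). This identifies the \(Y\)-side sum over \(V\) with the \(X\)-side sum over the preimage \(f^{-1}(V)\).

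The key step, and the only place the monotonicity of \(f\) is used, is the observation that \(f^{-1}(V)\) is upwards closed in \((X, \le)\) whenever \(V\) is upwards closed in \((Y, \le)\): if \(x \in f^{-1}(V)\) and \(x \le x'\), then \(f(x) \le f(x')\) by monotonicity of \(f\), and \(f(x) \in V\) together with the upward-closure of \(V\) gives \(f(x') \in V\), i.e.\ \(x' \in f^{-1}(V)\). Setting \(U := f^{-1}(V)\), an upwards-closed subset of \(X\), the hypothesis \(\theta_1 \sle \theta_2\) applied to \(U\) yields \(\sum_{x \in U} \theta_1(x) \le \sum_{x \in U}\theta_2(x)\), which by the displayed computation is precisely \(\sum_{y \in V}(f^\bullet\theta_1)(y) \le \sum_{y\in V}(f^\bullet\theta_2)(y)\). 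Since \(V\) was arbitrary, \(f^\bullet\theta_1 \sle f^\bullet\theta_2\), as required.

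I do not expect any genuine obstacle here: the entire content is the interplay between the pushforward \(f^\bullet\) on weights and the pullback \(f^{-1}\) on upwards-closed sets, mediated by the finite-support reindexing. The only two points meriting a line of care are that the reindexing is legitimate (finiteness of supports, so that no infinite sums arise) and that \(f^{-1}\) preserves upward-closure, both of which are immediate.
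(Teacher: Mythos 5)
Your proof is correct and follows essentially the same route as the paper's: pull back an upwards-closed set along \(f\), note the preimage is upwards closed by monotonicity of \(f\), and reindex the finite sum over the preimage. You simply spell out the two small verifications (upward-closure of \(f^{-1}(V)\) and the legitimacy of the reindexing) that the paper leaves implicit.
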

    
    \begin{proof}
        Let \(U \subseteq Y\) be an upwards closed subset of \((Y, \le)\).
        Then \(f^{-1}(U)\) is an upwards closed subset of \((X, \le)\).
        We have
        \begin{align*}
            f^\bullet\theta_1(U) &= \sum_{u \in U} f^\bullet\theta_1(u) 
            = \sum_{u \in U} \sum_{f(x) = u} \theta_1(x) 
            = \sum_{f(x) \in U} \theta_1(x) 
            \le \sum_{f(x) \in U} \theta_2(x) 
            = f^\bullet\theta_2(U) 
        \end{align*}
    \end{proof}

    Next we indicate the cases where free ordered modules are constructed by the functor \(\bar\O_{\mathbf P}\).
    Say that a semiring is \emph{cancellative} if \((\forall r,s,t)~r + s \le r + t\) implies \(s \le t\), \emph{difference ordered} if \(r \le s\) implies \((\exists t)~t + t = s\), and \emph{idempotent} if \(r + r = r\).

    \begin{theorem}\label{thm:main}
        If \(\mathbf P\) is either cancellative difference-ordered or idempotent, then \({(\bar\O_{\mathbf P}, [\delta], [\rho])}\) is the \(\PMod{\mathbf P}\)-presented monad. 
    \end{theorem}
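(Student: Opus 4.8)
The plan is to verify the two defining conditions of a \(\PMod{\mathbf P}\)-presented monad, namely (a) that \((\bar\O_{\mathbf P}(X,\le), [\rho]_X)\) is an ordered \(\mathbf P\)-module, and (b) the universal mapping property. Before that I would record the routine facts that need \emph{neither} hypothesis on \(\mathbf P\): that \(\sqsubseteq\) is a preorder, so \(\bar\O_{\mathbf P}\) lands in \(\Pos\); that \([\delta]\) and \([\rho]\) are well defined on \(\equiv\)-classes and monotone (for \([\delta]\) one checks \(x \le y \Rightarrow \delta_x \sqsubseteq \delta_y\) directly from upward closure; for \([\rho]\) one uses \(\sum_{x\in U}(r\theta)(x) = r\sum_{x\in U}\theta(x)\) and \(\sum_{x\in U}(\theta_1+\theta_2)(x) = \sum_{x\in U}\theta_1(x) + \sum_{x\in U}\theta_2(x)\) together with monotonicity of \(\cdot\) and \(+\)); and that naturality of \([\delta],[\rho]\) descends from naturality of \(\delta,\rho\) over \(\Sets\) using the preceding lemma. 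Since \(\equiv\) is then a congruence for the free-module operations and the ordered-module inequation is exactly the monotonicity just checked, condition (a) follows because equations and the displayed inequation pass to the quotient.

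The whole weight of the theorem therefore rests on condition (b). Fix an ordered \(\mathbf P\)-module \((Y,\beta)\) and a monotone \(f : (X,\le) \to (Y,\le)\). On the free module \(\O_{\mathbf P}X\) there is exactly one \(S\)-algebra homomorphism \(\hat f\) with \(\hat f \circ \delta = f\), namely \(\hat f(\theta) = \sum_{x} \theta(x)\, f(x)\) (a finite sum by finiteness of support). As \(\delta\) generates \(\O_{\mathbf P}X\) under the operations, once \(\hat f\) descends to \(\bar\O_{\mathbf P}X\) the resulting map \(f^\beta\) is automatically the \emph{unique} \(S\)-homomorphism with \(f^\beta \circ [\delta] = f\). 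Both descent and monotonicity of \(f^\beta\) follow from the single \emph{Key Lemma}: if \(\theta \sqsubseteq \theta'\) then \(\hat f(\theta) \le \hat f(\theta')\) in \(Y\) (applied in both directions for \(\equiv\), using antisymmetry of \(Y\)). This is where the hypotheses on \(\mathbf P\) enter, and it is the main obstacle.

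I would prove the Key Lemma through an ordered-semiring analogue of Strassen's coupling theorem. Writing \(T = \supp(\theta)\cup\supp(\theta')\), one first notes that \(\theta\sqsubseteq\theta'\) can be tested on upward-closed subsets of the finite poset \(T\) alone, since \(\upper V \cap T = V\) for every up-set \(V\) of \(T\). The \emph{Coupling Lemma} then asserts the existence of a finitely supported \(w : T\times T \to P\) with \(w(x,x') = 0\) unless \(x \le x'\), first marginal \(\sum_{x'} w(x,x') = \theta(x)\), and second marginal \(\sum_x w(x,x') \le \theta'(x')\). Granting this, the Key Lemma is a short computation using only \(+\), \(\le\), and the ordered-module axiom — crucially never subtraction in \(Y\):
\[
\hat f(\theta) = \sum_{x,x'} w(x,x')\,f(x) \le \sum_{x,x'} w(x,x')\,f(x') = \sum_{x'}\Big(\sum_x w(x,x')\Big) f(x') \le \sum_{x'} \theta'(x')\,f(x') = \hat f(\theta'),
\]
where the first inequality is the ordered-module axiom applied with equal scalar \(w(x,x')\) to \(f(x)\le f(x')\) (valid as \(w\) is supported on \(x\le x'\)), and the second applies the axiom with fixed module element \(f(x')\) to the scalar inequality \(\sum_x w(x,x') \le \theta'(x')\).

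The crux is thus the Coupling Lemma, and this is exactly where cancellativity-plus-difference-orderedness, respectively idempotence, are indispensable. I would argue by induction on \(|\supp(\theta)|\): choose a \(\le\)-minimal \(x_0\in\supp(\theta)\), observe that dominance on the up-set \(\upper x_0\) yields \(\theta(x_0) \le \sum_{x'\ge x_0}\theta'(x')\), distribute the mass \(\theta(x_0)\) among the points \(x'\ge x_0\) so that the pieces satisfy \(w(x_0,x')\le\theta'(x')\), subtract these pieces from \(\theta'\), delete \(x_0\) from \(\theta\), and recurse. The two genuinely delicate points are (i) the mass-decomposition step, i.e.\ turning \(a \le b_1 + \cdots + b_k\) into \(a = a_1 + \cdots + a_k\) with each \(a_i \le b_i\) (difference ordering realises the requisite differences and cancellativity makes them unique, while in the idempotent case the analogous decomposition comes from \(r+r=r\)), and (ii) verifying that \(\sqsubseteq\) is preserved after the subtraction so the inductive hypothesis applies. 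I expect step (ii) — checking that the dominance inequalities for \emph{every} up-set survive the local surgery — to be the hardest part of the whole argument.
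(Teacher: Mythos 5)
Your overall architecture matches the paper's: you reduce the theorem to conditions (a) and (b), and reduce (b) to the single statement that the induced homomorphism \(f^\beta\) is monotone with respect to \(\sqsubseteq\) (this is the paper's ``Fact (3)'', the only part it proves in detail). The genuine gap is the Coupling Lemma, which is false under the stated hypotheses in \emph{both} cases of the theorem. The mass-decomposition step you flag as delicate --- turning \(a \le b_1 + \cdots + b_k\) into \(a = a_1 + \cdots + a_k\) with \(a_i \le b_i\) --- is the Riesz decomposition property, and it does not follow from cancellativity plus difference-orderedness: difference-orderedness produces one witness \(t\) with \(a + t = b_1 + \cdots + b_k\), but gives no way to split \(a\) along the summands (in \(\mathbb R_{\ge 0}\) one would truncate with \(\min\), which needs the order to be total or at least a lattice). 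Already at the level of the additive ordered monoid, take \(\langle a,b,c,d \mid a+b=c+d\rangle\) with its algebraic order (cancellative, difference-ordered, bottom \(0\)): then \(a \le c+d\), but the only elements below \(c\) are \(0,c\) and below \(d\) are \(0,d\), so no decomposition exists; with the poset \(x_0 < y_1\), \(x_0 < y_2\) and \(\theta = a\,\delta_{x_0}\), \(\theta' = c\,\delta_{y_1} + d\,\delta_{y_2}\) one checks \(\theta \sqsubseteq \theta'\) yet no coupling supported on \(\le\) has the required marginals. The idempotent case fails for the genuine semiring given by the non-distributive lattice \(0 < p,q,r < 1\) with all joins of distinct atoms equal to \(1\): here \(p\,\delta_{x_0} \sqsubseteq q\,\delta_{y_1} + r\,\delta_{y_2}\), but \(p\) is not a join of an element below \(q\) with an element below \(r\).

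These examples also show that the Coupling Lemma is strictly stronger than the Key Lemma you need, since the desired conclusion survives: \(a f(x_0) \le (a+b)f(x_0) = (c+d)f(x_0) = c f(x_0) + d f(x_0) \le c f(y_1) + d f(y_2)\). The moral is that a correct argument must apply scalar inequalities to a single module element of \(Y\) \emph{before} distributing --- i.e.\ it must work inside the module \(Y\), not at the level of a scalar transport plan over \(P\). That is what the paper does: it inducts on \(\supp(\theta_1)\), peels off a \emph{maximal} element \(z\) of \(\supp(\theta_1)\) (so that \(\theta_1(\upper z) = \theta_1(z)\)), passes to a smaller instance by subtracting \(\theta_1(z)\) at \(z\) in the cancellative case or zeroing out \(z\) in the idempotent case, and recombines using the ordered-module axioms of \(Y\) directly. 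Salvaging your route would require adding the Riesz decomposition property as a hypothesis on \(\mathbf P\), which proves a genuinely weaker theorem.
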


    This follows from the next three facts.
    \begin{description}
        \item[Fact (1)] The transformation \(\delta\) is monotone with respect to \(\sqsubseteq\).
        \item[Fact (2)] \((\bar\O_{\mathbf P} (X, \le), \rho)\) is an ordered \(\mathbf P\)-module.
        This means that for \(\theta_1,\theta_2, \varphi \in \mathcal O_\omega X\) and \(p, q \in P\),
        \begin{itemize}
            \item[(i)] if \(\theta_1 \sqsubseteq \theta_2\), then \(\theta_1 + \varphi \sqsubseteq \theta_2 +\varphi\), and
            \item[(ii)] if \(p \le q\) (in \(\mathbb P\)), then \(p  \varphi \sqsubseteq q  \varphi\).
        \end{itemize} 
        \item[Fact (3)] For any ordered \(\mathbf P\)-module \((Y, \le, \beta)\) and any monotone \(f : (X, \le) \to (Y, \le)\), the homomorphism \(f^\beta : (\mathcal O_\omega X, \rho) \to (Y, \beta)\) defined
        \[
            f^\beta(0) = 0
            \qquad 
            f^\beta(\delta_x) = f(x) 
            \qquad 
            f^\beta(r \theta) = r f^\beta(\theta) 
            \qquad 
            f^\beta(\theta_1 + \theta_2) = f^\beta(\theta_1) + f^\beta(\theta_2)
        \]
        is monotone with respect to the heavier-higher order.
    \end{description}
    We are only going to argue for Fact (3), as it is the only one with a nontrivial proof (additionally, Facts (1) and (2) do not require the cancellative, difference ordered, or idempotent assumptions).
    The argument uses the following lemma, whose proof we omit. 

    \begin{lemma}
        Let \(\mathbf P\) be an ordered semiring. 
        \begin{enumerate}
            \item If \(\mathbf P\) is cancellative and difference ordered, then \(r \le s\) implies there exists a unique \(t \in P\) such that \(r + t = s\).
            (We write the witness as \(t = s - r\).)
            \item If \(\mathbf P\) is idempotent, then in every ordered \(\mathbf P\)-module \((X, \le, +)\), \(x \le y\) if and only if \(x + y = y\).
        \end{enumerate}
    \end{lemma}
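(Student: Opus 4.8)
The plan is to handle the two parts separately, as they rest on different hypotheses.

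Part~(1) is almost immediate. Existence is exactly what difference-orderedness provides: from \(r \le s\) it hands us a \(t\) with \(r + t = s\). Only uniqueness needs an argument, and this is where cancellativity is spent. If \(r + t = s = r + t'\), then reflexivity of \(\le\) gives both \(r + t \le r + t'\) and \(r + t' \le r + t\), so the cancellation law \(r + a \le r + b \Rightarrow a \le b\) yields \(t \le t'\) and \(t' \le t\); antisymmetry then forces \(t = t'\). I foresee no real obstacle here.

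For part~(2) the content is to relate the order on an ordered \(\mathbf P\)-module \(X\) to its addition using only the single compatibility axiom \(r \le s \wedge x \le y \Rightarrow rx + z \le sy + z\). I would first extract the following facts. (A) Setting \(r = 0\), \(s = 1\) (legitimate because \(0\) is the bottom of \(P\), so \(0 \le 1\)) and forcing the two module arguments to coincide collapses the axiom to \(z \le w + z\) for all \(w, z \in X\); taking \(z = 0\) additionally shows \(0\) is the additive bottom of \(X\). (B) Setting \(r = s = 1\) gives monotonicity of \(+\) in its first argument, \(a \le b \Rightarrow a + z \le b + z\). (C) Idempotence of \(\mathbf P\) means \(1 + 1 = 1\), so the module laws give \(x + x = 1x + 1x = (1+1)x = 1x = x\); thus addition is idempotent on \(X\). (D) I will also use that \(+\) is commutative, the standard convention for modules and visibly true in each \(\O_{\mathbf P} X\).

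Both implications are then short. If \(x \le y\), fact~(A) gives \(y \le x + y\), while fact~(B) with \(z = y\) gives \(x + y \le y + y\), which equals \(y\) by~(C); antisymmetry yields \(x + y = y\). Conversely, if \(x + y = y\), fact~(A) gives \(x \le y + x\), and~(D) rewrites \(y + x = x + y = y\), so \(x \le y\). The only step asking for any insight is the observation that the weak-looking compatibility axiom already forces \(0\) to act as an additive bottom and \(+\) to be monotone, so that \(x + y\) is in fact the join of \(x\) and \(y\); once idempotence (inherited from \(\mathbf P\)) collapses \(y + y\) to \(y\), antisymmetry finishes both directions. The main obstacle is thus conceptual rather than computational: recognising which specialisations of the compatibility axiom yield monotonicity and a bottom element, and that the hypothesis of an idempotent \(\mathbf P\) transfers to idempotence of the module operation.
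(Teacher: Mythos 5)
Your proof is correct, but note that the paper itself \emph{omits} the proof of this lemma (``whose proof we omit''), so there is no in-paper argument to compare against; your write-up is a valid filling-in of that gap. Part~(1) is exactly the expected two lines: existence from difference-orderedness, uniqueness by applying cancellativity to \(r + t \le r + t'\) and \(r + t' \le r + t\) and concluding by antisymmetry. (You silently correct a typo in the paper's definition, which literally reads ``\(r \le s\) implies \((\exists t)\ t + t = s\)'' but clearly means \(r + t = s\), as the lemma statement itself confirms.) In Part~(2), your extraction of facts~(A)--(C) from the single compatibility axiom is sound: instantiating \(r = 0 \le 1 = s\), \(x = y = w\) gives \(z = 0w + z \le 1w + z = w + z\) via the module laws \(0x = 0\), \(1x = x\); instantiating \(r = s = 1\) gives monotonicity of \(+\) in the first argument; and idempotence of \(\mathbf P\) (so \(1 + 1 = 1\)) yields \(x + x = (1+1)x = x\). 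The one genuine caveat is the one you flag yourself: the converse direction (\(x + y = y \Rightarrow x \le y\)) needs commutativity of \(+\) in \(X\), since the compatibility axiom only produces \(z \le w + z\), not \(z \le z + w\). The paper's definition of \(\mathbf P\)-module says only ``monoid \((X,0,+)\)'', but commutativity is the standard convention for modules over semirings (whose additive monoid the paper does require to be commutative), holds in every \(\O_{\mathbf P} X\), and is surely intended; with it, both directions close by antisymmetry exactly as you argue.
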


    \begin{proof}[Proof of Fact (3).]
        Let \(f : (X, \le) \to (Y, \le)\) be a monotone map into an ordered \(\mathbf P\)-module \((Y, \le, 0,  +)\), and let \(\theta_1 \sqsubseteq \theta_2\). 
        We proceed by induction on the support of \(\theta_1\).

        Assume \(\mathbf P\) is cancellative difference-ordered. 
        Let \(z\) be any maximal element of \(\supp(\theta_1)\), so that \(\theta_1(z) = \theta_1(\upper z)\).
        Define 
        \[
            \theta_i'(x) = \begin{cases}
                \theta_i(z) - \theta_1(z) &x = z \\
                \theta_i(x) & \text{otherwise} \\
            \end{cases}
        \]
        For any upwards closed subset \(U \ni z\), \(\theta_i(U) = \theta_1(z) + \theta_i'(U)\), and
        \[
            \theta_1(z) + \theta_1'(U)
            = \theta_1(U)
            \le \theta_2(U)
            = \theta_1(z) + \theta_2'(U)
        \]
        By cancellativity, \(\theta_1'(U) \le \theta_2'(U)\).
        For any upwards closed subset \(U\) such that \(z \notin U\), \(\theta_i(U) = \theta_i'(U)\), and again
        \(
            \theta_1'(U) \le \theta_2'(U)
        \).
        It follows that \(\theta_1' \sqsubseteq \theta_2'\), so from the induction hypothesis  we have \(f^\beta(\theta_1') \le f^\beta(\theta_2')\).
        Since \(f^\beta\) is a homomorphism,  
        \begin{align*}
            f^\beta(\theta_1) 
            &= f^\beta(\theta_1(z)\delta_z + \theta_1')  \\
            &=  f^\beta(\theta_1(z)\delta_z) + f^\beta(\theta_1')  \\
            &\le  f^\beta(\theta_1(z)\delta_z) + f^\beta(\theta_2') \\
            &= f^\beta(\theta_1(z) \delta_z + \theta_2') \\
            &= f^\beta(\theta_2)
        \end{align*}
        Therefore \(f^\beta\) is monotone when \(\mathbf P\) is cancellative and difference-ordered.
        
        Now assume \(\mathbf P\) is idempotent.      
        Let \(z\) be any maximal element of \(\supp(\theta_1)\) and define
        \[
            \theta_1'(x) = \begin{cases}
                0 & x = z \\
                \theta_1(x) & x \neq z
            \end{cases}
        \]
        Then \(\theta_1' \sqsubseteq \theta_2\), since \(\theta_1(U) = \theta_1'(U) + \theta_1(z)\) if \(z \in U\) and \(\theta_1(U) = \theta_1'(U)\) if \(z \notin U\).
        By the induction hypothesis, \(f^\beta(\theta_1') \le f^\beta(\theta_2)\).
        
        On the other hand, \(\theta_1(z)\delta_z \sqsubseteq \theta_2\) by assumption. 
        This means there is some \(y \in \supp(\theta_2)\) such that \(z \le y\) and \(\theta_1(z) \le \theta_2(y)\).
        We therefore have 
        \begin{align*}
            f^\beta(\theta_1) 
            &= f^\beta(\theta_1(z)\delta_z + \theta_1')  \\
            &= \theta_1(z) f(z) + f^\beta(\theta_1') \\
            &\le \theta_1(z)f(z) + f^\beta(\theta_2) \\
            &\le \theta_2(y)f(y) + f^\beta(\theta_2) \\
            &\le f^\beta(\theta_2) + f^\beta(\theta_2) \\
            &= f^\beta(\theta_2) 
        \end{align*}
        this shows that \(f^\beta\) is monotone when \(\mathbf P\) is idempotent.
    \end{proof}

    In fact, if \(\mathbf P\) is cancellative, like in \cref{eg:reals}, then \(\sqsubseteq\) is a partial order.
    Consequently, by \cref{thm:main}, if \(\mathbf P\) is cancellative and difference ordered, \((\bar\O_{\mathbf P}, [\delta], [\rho])\) lifts the monad \((\O_{\mathbf P}, \delta, \rho)\) on \(\Sets\). 

    \begin{restatable}{theorem}{cancellativedoesthetrick}
        If \(\mathbf P\) is cancellative, then \((\bar\O, [\delta], [\rho])\) lifts \((\O, \delta, \rho)\).
    \end{restatable}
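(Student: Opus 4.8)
The plan is to reduce the whole statement to one order-theoretic observation: when $\mathbf P$ is cancellative, the heavier-higher preorder $\sqsubseteq$ on $\O_{\mathbf P} X$ is in fact antisymmetric, so that the equivalence $\equiv$ collapses to equality. Granting this, each class $[\theta]$ is the singleton $\{\theta\}$, the quotient defining $\bar\O_{\mathbf P}(X,\le) = (\O_{\mathbf P} X/{\equiv}, \sqsubseteq)$ is trivial, and the underlying set of $\bar\O_{\mathbf P}(X,\le)$ is literally $\O_{\mathbf P} X$. Writing $U : \Pos \to \Sets$ for the forgetful functor, the lifting is then witnessed by the on-the-nose identities $U\bar\O_{\mathbf P} = \O_{\mathbf P} U$, $U[\delta] = \delta U$, and $U[\rho] = \rho U$, since under a trivial quotient the representatives $f^\bullet$, $\delta$, and $\rho$ are left unchanged.

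First I would establish antisymmetry. Suppose $\theta_1 \equiv \theta_2$, i.e. $\sum_{x \in V}\theta_1(x) = \sum_{x \in V}\theta_2(x)$ for every upwards-closed $V \subseteq X$. Fix $w \in X$ and apply this equality to the two up-sets $\upper w = \{y \mid w \le y\}$ and $\upper w \setminus \{w\} = \{y \mid w \le y,\ y \neq w\}$; both are upwards closed, the second because $w < y \le z$ forces $w < z$ (if $z = w$, then $y \le z = w$ together with $w < y$ gives $w < y \le w$). Setting $c = \sum_{y \in \upper w \setminus \{w\}}\theta_1(y) = \sum_{y \in \upper w \setminus \{w\}}\theta_2(y)$, the equality on $\upper w$ reads $c + \theta_1(w) = c + \theta_2(w)$. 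Applying the cancellativity hypothesis to $c + \theta_1(w) \le c + \theta_2(w)$ and to its converse, and then antisymmetry of $\le$ on $P$, yields $\theta_1(w) = \theta_2(w)$. As $w$ was arbitrary, $\theta_1 = \theta_2$, so $\sqsubseteq$ is a partial order.

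With antisymmetry in hand the remaining steps are formal and, crucially, independent of the cancellative, difference-ordered, or idempotent hypotheses. The lemma that $f^\bullet$ is monotone with respect to the heavier-higher order shows that $\bar\O_{\mathbf P}(f) = f^\bullet$ is a monotone map, so $\bar\O_{\mathbf P}$ is an endofunctor on $\Pos$ whose underlying set functor is exactly $\O_{\mathbf P}$; Fact~(1) shows $[\delta] = \delta$ is monotone; and Fact~(2), that $(\bar\O_{\mathbf P}(X,\le), \rho)$ is an ordered $\mathbf P$-module, shows $[\rho] = \rho$ is monotone. These are precisely the data of the $\Pos$-level triple, and applying $U$ returns the $\Sets$-level triple $(\O_{\mathbf P}, \delta, \rho)$ verbatim, which is the assertion that $(\bar\O_{\mathbf P}, [\delta], [\rho])$ lifts $(\O_{\mathbf P}, \delta, \rho)$.

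I expect the antisymmetry step to be the only real obstacle; everything after it is bookkeeping against facts already proved. The one point I would be careful about is the reading of ``lifts'': the argument above establishes compatibility of $U$ with the presented-monad data, namely the functor, the unit, and the structure map $\rho$, which is exactly what the triple notation names. If instead one wants a lifting of the induced monads $(\O_{\mathbf P}, \delta, \mu)$ and $(\bar\O_{\mathbf P}, [\delta], \bar\mu)$, one must additionally verify that the derived multiplication $\mu_X = \id_{\O_{\mathbf P} X}^{\rho}$ is monotone for $\sqsubseteq$; this is the sole place where the hypotheses of \cref{thm:main} were genuinely used, through Fact~(3), so there I would either invoke \cref{thm:main} when those hypotheses hold or be explicit that the present theorem asserts only the weaker, data-level lifting.
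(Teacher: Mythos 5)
Your proof is correct, and the key step --- antisymmetry of \(\sqsubseteq\) under cancellativity --- is argued by a genuinely different and, I think, cleaner route than the paper's. The paper proceeds by induction on the support: it picks a point \(x_0\), peels off \(\theta_1(x_0)\cdot\delta_{x_0}\) from both \(\theta_1\) and \(\theta_2\) by forming \(\theta_i'(x_0) = \theta_i(x_0) - \theta_1(x_0)\), cancels, and recurses on the smaller support. That argument silently requires the difference \(\theta_2(x_0) - \theta_1(x_0)\) to \emph{exist}, which is guaranteed by the cancellative-plus-difference-ordered lemma but not by cancellativity alone as the theorem is stated. Your direct pointwise argument --- comparing the sums over the two up-sets \(\upper w\) and \(\upper w \setminus \{w\}\), both of which you correctly verify to be upwards closed and both of which are finite sums by finite support, and then cancelling \(c\) in \(c + \theta_1(w) = c + \theta_2(w)\) --- needs only the cancellation law and antisymmetry of \(\le\) on \(P\), so it actually proves the theorem under exactly its stated hypothesis and with no induction. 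The remaining bookkeeping (the quotient collapsing to the identity, monotonicity of \(f^\bullet\), \(\delta\), \(\rho\) from the earlier lemma and Facts (1)--(2)) matches what the paper leaves implicit, and your closing caveat about whether ``lifts'' refers to the presented-monad data or to the induced multiplication \(\mu\) is a fair one: the paper is equally silent on the latter, and its surrounding prose (``cancellative and difference ordered'') suggests it, too, intends \cref{thm:main} to supply that missing piece.
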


    \begin{proof}
        We show that \(\sqsubseteq\) is antisymmetric.
        Let \(\theta_1 \sqsubseteq \theta_2 \sqsubseteq \theta_1\). 
    %
    %
        Define \(\theta_i'(x) = \theta_i(x)\) when \(x \neq x_0\) and \(\theta_i'(x_0) = \theta_i(x_0) - \theta_1(x_0)\), \(i = 1,2\).
        Since \(\theta_i = \theta_1(x_0)\cdot \delta_{x_0} + \theta_i'\), by cancellativity of \(\mathbf P\) 
        \[
            \theta_1(x_0)\cdot \delta_{x_0} + \theta_1' 
            \sqsubseteq \theta_1(x_0)\cdot \delta_{x_0} + \theta_2'
            \sqsubseteq \theta_1(x_0)\cdot \delta_{x_0} + \theta_1'
        \]
        implies \(\theta_1' \sqsubseteq \theta_2' \sqsubseteq \theta_1'\). 
        By induction, \(\theta_1' = \theta_2'\), so \(\theta_1 = \theta_2\) as well.
    \end{proof}
    
    \begin{remark}
        The assumption that \(\mathbf P\) is cancellative is necessary here. 
        Consider the example of \(\mathbf P = \mathbf 2 = (\{0,1\}, \le, 0, 1, \max, \min)\).
        The heavier-higher order is not a partial order on \(\mathcal O_\omega \{0,1\}\), where \(0 < 1\), because it cannot distinguish between \(\delta_{1}\) and \(\delta_0 + \delta_1\).
    \end{remark}

    \cref{thm:main} also extends to products of cancellative difference-ordered and idempotent semirings.

    \begin{corollary}
        Suppose \(\mathbf P = \prod_{i \in I} \mathbf P_i\), where \(\mathbf P_i\) is either cancellative and difference ordered, or idempotent, for each \(i \in I\).
        Then \({(\bar\O_{\mathbf P}, [\delta], [\rho])}\) is the \(\PMod{\mathbf P}\)-presented monad. 
    \end{corollary}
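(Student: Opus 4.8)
The plan is to reduce the statement to \cref{thm:main} applied to each factor \(\mathbf P_i\) separately, and then to glue the resulting monotonicity statements along the product structure of \(\mathbf P\). First note that the corollary has genuine content only when both kinds of factor occur: a product of cancellative difference-ordered (resp.\ idempotent) semirings is again cancellative difference-ordered (resp.\ idempotent), verified coordinatewise, so in the homogeneous cases \cref{thm:main} already applies directly. As in \cref{thm:main}, only Fact (3) requires work, since Facts (1) and (2) were proved without any hypothesis on \(\mathbf P\) and so transfer to \(\mathbf P = \prod_{i \in I}\mathbf P_i\) verbatim. Thus it suffices to show that for every ordered \(\mathbf P\)-module \((Y, \le, 0, +)\) and every monotone \(f : (X,\le) \to (Y,\le)\), the homomorphism \(f^\beta\) is monotone with respect to \(\sqsubseteq\).

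The first ingredient is that all the relevant structure on \(\mathbf P\) is computed coordinatewise. Writing \(\pi_i : P \to P_i\) for the \(i\)th projection and \(\theta^{(i)} := \pi_i \circ \theta \in \mathcal O_\omega X\) (which is finitely supported because \(\theta\) is), one reads directly from the definition of the heavier-higher order that
\[
    \theta_1 \sqsubseteq \theta_2
    \quad\Longleftrightarrow\quad
    \theta_1^{(i)} \sqsubseteq \theta_2^{(i)} \text{ in } \bar\O_{\mathbf P_i}(X,\le) \text{ for every } i \in I,
\]
since both the order and the sum in \(P\) are taken coordinatewise in the product.

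The second ingredient isolates the behaviour of \(f^\beta\) in coordinate \(i\). Let \(e_i \in P\) be the idempotent that is \(1\) in coordinate \(i\) and \(0\) elsewhere, and set \(Y_i := e_i Y = \{e_i a \mid a \in Y\}\). Using the module axioms one checks that \(Y_i\) is an ordered \(\mathbf P_i\)-module under the action \(r_i \cdot (e_i a) := (r_i e_i) a\), and that \(f_i := e_i f : (X,\le) \to (Y_i, \le)\) is monotone. I would then establish the identity
\[
    e_i\, f^\beta(\theta) = f_i^{\beta_i}\bigl(\theta^{(i)}\bigr) \qquad \text{for all } \theta \in \mathcal O_\omega X,
\]
where \(f_i^{\beta_i}\) is the homomorphism supplied by \cref{thm:main} applied to the factor \(\mathbf P_i\): both sides are \(\mathbf P\)-module homomorphisms \(\mathcal O_\omega X \to Y_i\) (with \(\mathbf P\) acting on \(Y_i\) through \(\pi_i\)) that send \(\delta_x \mapsto f_i(x)\), so they coincide by freeness of \(\mathcal O_\omega X\) as the free \(\mathbf P\)-module on \(X\). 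Since each \(\mathbf P_i\) is cancellative difference-ordered or idempotent, \cref{thm:main} makes each \(f_i^{\beta_i}\) monotone; combined with the first ingredient this gives \(e_i f^\beta(\theta_1) \le e_i f^\beta(\theta_2)\) for every \(i\) whenever \(\theta_1 \sqsubseteq \theta_2\).

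It remains to glue these component inequalities into \(f^\beta(\theta_1) \le f^\beta(\theta_2)\), and this is where I expect the main obstacle to lie. When \(I\) is finite the glue is immediate, because \(1 = \sum_{i \in I} e_i\) in \(P\), so \(a = \sum_{i} e_i a\) for every \(a \in Y\) and the claim follows by monotonicity of \(+\). For infinite \(I\) the identity \(1 = \sum_i e_i\) fails, and \emph{a priori} ``\(e_i a \le e_i b\) for all \(i\)'' need not force \(a \le b\), as an ordered \(\mathbf P\)-module can fail to embed into \(\prod_i Y_i\); so the clean coordinatewise reduction covers only finite products. For a general index set I would instead re-run the induction of \cref{thm:main} on \(\supp(\theta_1)\) directly, splitting \(I\) into its cancellative and idempotent coordinates and peeling off a maximal \(z \in \supp(\theta_1)\) by subtracting \(\theta_1(z)\) in the cancellative coordinates (where differences exist by the preceding lemma) while invoking absorption in the idempotent coordinates. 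Reconciling these two peeling mechanisms within a single module \(Y\) — in particular producing one residue \(\theta_2'\) and one compensating element of \(Y\) that simultaneously serves every coordinate — is the delicate step.
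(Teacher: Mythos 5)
The paper states this corollary without a proof, so there is nothing to compare against line by line; judged on its own terms, your argument is complete and correct for finite \(I\), and its two ingredients --- the coordinatewise decomposition \(\theta_1 \sqsubseteq \theta_2\) iff \(\theta_1^{(i)} \sqsubseteq \theta_2^{(i)}\) for all \(i\), and the identity \(e_i\,f^\beta(\theta) = f_i^{\beta_i}(\theta^{(i)})\) obtained from freeness of \(\O_{\mathbf P}X\) --- are exactly the right ones. The genuine gap is the infinite case, which you correctly flag but then attack by the wrong route: no re-running of the induction of \cref{thm:main} and no reconciliation of ``two peeling mechanisms'' is needed, because the observation in your very first paragraph already closes the gap. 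Partition \(I = I_c \sqcup I_d\) into the indices whose factors are cancellative and difference-ordered and those whose factors are idempotent. Both classes are closed under \emph{arbitrary} products (checked coordinatewise, as you note), so \(\mathbf P_c = \prod_{i \in I_c}\mathbf P_i\) is a single cancellative difference-ordered semiring, \(\mathbf P_d = \prod_{i \in I_d}\mathbf P_i\) is a single idempotent semiring, and \(\mathbf P \cong \mathbf P_c \times \mathbf P_d\) as ordered semirings (if one block is empty it is the trivial semiring and the homogeneous case applies directly). Now run your finite-index argument on this \emph{binary} product: \(1 = e_c + e_d\) holds on the nose, so \(f^\beta(\theta) = e_c f^\beta(\theta) + e_d f^\beta(\theta)\) by the axioms \(1x = x\) and \((r+s)x = rx + sx\), each summand is monotone in \(\theta\) with respect to \(\sqsubseteq\) by Fact (3) applied to \(\mathbf P_c\) and to \(\mathbf P_d\) respectively, and monotonicity of \(+\) finishes the proof. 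The ``delicate step'' you end on is a red herring.

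One small point to tighten when writing this up: in checking that \(\theta \mapsto e_i f^\beta(\theta)\) and \(\theta \mapsto f_i^{\beta_i}(\theta^{(i)})\) are both homomorphisms into \(Y_i\) agreeing on the generators \(\delta_x\), you use \(r(a+b) = ra + rb\) in \(Y\); this holds in the intended notion of \(\mathbf P\)-module (and in every free one) but is not among the axioms the paper explicitly lists, so it is worth stating.
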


    Despite the apparent necessity of the extra assumptions in \cref{thm:main}, the construction \((\bar\O_{\mathbf P}, [\rho])\) always produces a \(\mathbf P\)-module, and the transformation \([\delta] : \Id \Rightarrow \bar\O_{\mathbf P}\) is always monotone with respect to the heavier-higher order. 
    One might wonder if this construction is therefore more general.

    \begin{question}
        Are the restrictions on \(\mathbf P\) in \cref{thm:main} necessary? 
        In other words, is there a semiring \(\mathbf P\) such that \(\bar\O_{\mathbf P}\) does \emph{not} construct the free ordered \(\mathbf P\)-module?
    \end{question}

    A negative answer to this question would consist of an example of an ordered semiring \(\mathbf P\) (that cannot be decomposed into cancellative difference ordered and idempotent semirings) and a monotone map \(f : (X, \le) \to (Y, \le)\) into a \(\mathbf P\)-module \((Y, \le, \beta)\) such that the inductively defined homomorphism \(f^\beta : \O_{\mathbf P} X \to Y\) is not monotone with respect to the heavier-higher order. 

    \begin{example} 
        Let \(\mathbf A\) be the ordered semiring \((A, \le, \vec 0, \vec 1, +, \cdot)\), where 
        \[
            A = \left\{f : \mathbb N \to \mathbb N ~\middle|~ \text{either \(f\) is constant or}~(\forall n)~f^{-1}(n)~\text{is finite}\right\}  
        \]
        and \(\le\) is the point-wise order. 
        Then \(\mathbf A\) is cancellative, so \(\sqsubseteq\) is a partial order. 
        On the other hand, \(\mathbf A\) is not difference-ordered: consider \(f_1(n) = n\) and \(f_2(n) = n + (n \mod 2)\). 
        Then \(f_1 \le f_2\), but \((f_2 - f_1)(n) = n \mod 2\), which is \(0\) on an infinite proper subset of \(\mathbb N\).
        This means that \(\mathbf A\) cannot be of the form described in \cref{thm:main}, as a product of difference-ordered semirings is always difference-ordered. 
        Is this a counterexample?
    \end{example}

    \section{Distribution theories} 
    Finally, I would like to consider an application of our study of ordered semirings to subprobabilistic state-based systems.
    For a given ordered semiring \(\mathbf P\), define
    \[
        \D_{\mathbf P} X = \left\{\theta \in \O_{\mathbf P}X ~\middle|~ \sum_{x \in X} \theta(x) \le 1\right\}
        \qquad\qquad 
        \bar\D_{\mathbf P} (X, \le) = (\D_{\mathbf P}X/{\equiv}, \sqsubseteq)   
    \] 
    With the added \(\bar\D_{\bf P}(h) = h^\bullet\), \(\bar\D_{\bf P}\) is a subfunctor of \(\bar\O_{\bf P}\). 
    Standard subprobability distributions are obtained by setting \(\mathbf P = \mathbf R^+\), and we will simply write \(\D\) instead of \(\D_{\mathbf R^+}\). 

    I define a (\emph{subprobabilistic}) \emph{state-based decision process} (or \emph{SDP}) to be a function of the form \(\gamma: X \to \D Y\), where \(X\) is the \emph{state space} and \(Y\) is a set, disjoint from \(X\), that represents extra data like state transitions or outputs of the machine.
    In~\cite{schmid2022ordered,schmidrozowskisilvarot2022processes} for example, \(Y = \Var + \Act\times X\) for fixed sets \(\Var\) and \(\Act\).

    For a concrete example of a subprobabilistic SDP, consider rolling a six-sided die. 
    There is one state in this system, with six outputs. 
    \[\begin{tikzpicture}[baseline=(current bounding box.center)]
        \node[state] (0) at (0,0) {\(x_1\)};
        \node (1) at (-1.5, -1) {\(\epsdice 1\)};
        \node (2) at (-2, -0) {\(\epsdice 2\)};
        \node (3) at (-1.5, 1) {\(\epsdice 3\)};
        \node (4) at (1.5, -1) {\(\epsdice 4\)};
        \node (5) at (2, 0) {\(\epsdice 5\)};
        \node (6) at (1.5, 1) {\(\epsdice 6\)};
        \draw (0) edge[-implies, double, double distance=2pt, color=gray] node[color=black] {\(\frac16\)} (1);
        \draw (0) edge[-implies, double, double distance=2pt, color=gray] node[color=black] {\(\frac16\)} (2);
        \draw (0) edge[-implies, double, double distance=2pt, color=gray] node[color=black] {\(\frac16\)} (3);
        \draw (0) edge[-implies, double, double distance=2pt, color=gray] node[color=black] {\(\frac16\)} (4);
        \draw (0) edge[-implies, double, double distance=2pt, color=gray] node[color=black] {\(\frac16\)} (5);
        \draw (0) edge[-implies, double, double distance=2pt, color=gray] node[color=black] {\(\frac16\)} (6);
    \end{tikzpicture}
    \qquad\qquad
    \gamma(x_1) = \frac16 \delta_{\epsdice 1} + \cdots + \frac16\delta_{\epsdice 6} \]
    Unfortunately, implementing this decision process in everyday life requires you to have a six-sided die on hand. 
    As Knuth and Yao observe in~\cite{knuthyao1976random}, however, a series of coin flips can do the same job.
    The Knuth-Yao algorithm can be visualised as a sort of state-based system, except that it involves state-to-state transitions that carry probabilities.
    \[\begin{tikzpicture}[baseline=(current bounding box.center)]
        \node[state] (1) at (0,0) {\(x_1\)};
        \node[state] (2) at (1.5,1) {\(x_2\)};
        \node[state] (3) at (1.5,-1) {\(x_3\)};
        \node[state] (4) at (3, 2) {\(x_4\)};
        \node[state] (5) at (3, 0) {\(x_5\)};
        \node[state] (6) at (3, -2) {\(x_6\)};
        \node (7) at (4.5, 2.5) {\({\large\epsdice 1}\)};
        \node (8) at (4.5, 1.5) {\({\large\epsdice 2}\)};
        \node (9) at (4.5, 0.5) {\({\large\epsdice 3}\)};
        \node (10) at (4.5, -0.5) {\({\large\epsdice 4}\)};
        \node (11) at (4.5, -1.5) {\({\large\epsdice 5}\)};
        \node (12) at (4.5, -2.5) {\({\large\epsdice 6}\)};

        \draw[color=gray] (1) edge[->] node[color=black] {\(\frac12\)} (2);
        \draw[color=gray] (1) edge[->, bend right] node[color=black] {\(\frac12\)} (3);
        \draw[color=gray] (3) edge[->, bend right] node[color=black] {\(\frac12\)} (1);
        \draw[color=gray] (2) edge[->] node[color=black] {\(\frac12\)} (4);
        \draw[color=gray] (2) edge[->] node[color=black] {\(\frac12\)} (5);
        \draw[color=gray] (3) edge[->] node[color=black] {\(\frac12\)} (6);
        \draw[color=gray] (4) edge[-implies, double, double distance=2pt] node[color=black] {\(\frac12\)} (7);
        \draw[color=gray] (4) edge[-implies, double, double distance=2pt] node[color=black] {\(\frac12\)}  (8);
        \draw[color=gray] (5) edge[-implies, double, double distance=2pt] node[color=black] {\(\frac12\)} (9);
        \draw[color=gray] (5) edge[-implies, double, double distance=2pt] node[color=black] {\(\frac12\)}  (10);
        \draw[color=gray] (6) edge[-implies, double, double distance=2pt] node[color=black] {\(\frac12\)} (11);
        \draw[color=gray] (6) edge[-implies, double, double distance=2pt] node[color=black] {\(\frac12\)}  (12);
    \end{tikzpicture}
    \qquad\qquad 
    \begin{aligned}
        \omit\rlap{\(X \xrightarrow{\gamma} \D(X + \{\epsdice1,\dots,\epsdice6\})\)}
        \\
        x_1 &\mapsto (1/2) \delta_{x_2} + (1/2) \delta_{x_3} \\
        x_2 &\mapsto (1/2) \delta_{x_4} + (1/2) \delta_{x_5} \\
        x_3 &\mapsto (1/2) \delta_{x_1} + (1/2) \delta_{x_6} \\
        x_4 &\mapsto (1/2) \delta_{\epsdice1} + (1/2) \delta_{\epsdice2} \\
        x_5 &\mapsto (1/2) \delta_{\epsdice3} + (1/2) \delta_{\epsdice4} \\
        x_6 &\mapsto (1/2) \delta_{\epsdice5} + (1/2) \delta_{\epsdice6}
    \end{aligned}
    \]
    Recognizing the six-sided die hidden in the SDP above amounts to determining the probability of \emph{eventually} outputting one of \(\epsdice1,\dots,\epsdice6\) if the initial state is \(x_1\).
    This produces the subprobabilistic state-based system \(\gamma^\dagger : X \to \D Y\) below, 
    \begin{center}
    \begin{tabular}{c | c c c c c c}
        \(\gamma^\dagger(x_i)(-)\) & \(\epsdice1\) & \(\epsdice2\) &\(\epsdice3\) &\(\epsdice4\) &\(\epsdice5\) &\(\epsdice6\) \\
        \hline
        \(x_1\) & \(1/6\)& \(1/6\)& \(1/6\)& \(1/6\)& \(1/6\)& \(1/6\) \\
        \(x_2\) & \(1/4\)& \(1/4\)& \(1/4\)& \(1/4\)& \(0\)& \(0\) \\
        \(x_3\) & \(1/12\)& \(1/12\)& \(1/12\)& \(1/12\)& \(1/3\)& \(1/3\) \\
        \(x_4\) & \(1/2\)& \(1/2\)& \(0\)& \(0\)& \(0\)& \(0\) \\
        \(x_5\) & \(0\)& \(0\)& \(1/2\)& \(1/2\)& \(0\)& \(0\) \\
        \(x_6\) & \(0\)& \(0\)& \(0\)& \(0\)& \(1/2\)& \(1/2\) \\
    \end{tabular}
    \end{center}
    Notice that \(x_1\) represents the six-sided die system from before. 
    Algebraically, the defining feature of \(\gamma^\dagger\) is that it is the unique solution to the recursive specification
    \begin{align*}
        x_1 &= (1/2) {x_2} + (1/2) {x_3} \\
        x_2 &= (1/2) {x_4} + (1/2) {x_5} \\
        x_3 &= (1/2) {x_1} + (1/2) {x_6} \\
        x_4 &= (1/2) {\epsdice1} + (1/2) {\epsdice2} \\
        x_5 &= (1/2) {\epsdice3} + (1/2) {\epsdice4} \\
        x_6 &= (1/2) {\epsdice5} + (1/2) {\epsdice6} 
    \end{align*}
    in the sense that if you replaced each \(x_i\) with \(\gamma^\dagger(x_i)\), you would see six true identities. 
    
    Let us call an SDP \(X \to \D_{\mathbf P}Y\) \emph{unguarded} when \(X \cap Y \neq \emptyset\), and note that every unguarded system can be written in the form \(\gamma : X \to \D_{\mathbf P}(X + Y)\) for some disjoint \(X\) and \(Y\).
    In the case of sets, it is clear which unguarded SDPs have unique solutions and which do not: \(x_1 = x_1\) has many solutions, for example, but \(x_1 = (1/3)x_1 + (2/3)y\) has only one. 
    Comparing subdistributions point-wise, one can show that every recursive specification has a \emph{least} solution. 
    What I would like to investigate next is for which ordered semirings \(\mathbf P\) we can guarantee that every finite\footnote{In the sense that \(X\) is finite.} unguarded SDP with ordered outputs \(\gamma : (X, =) \to \bar\D_{\mathbf P}((X, =) + (Y, \le))\) has a least solution \(\gamma^\dagger : (X, =) \to \bar\D_{\mathbf P}(Y, \le)\).
    
    One way to ensure that every unguarded SDP has a least solution is to mimic the conditions of \(\mathbf R^+\).
    To this end, call an ordered semiring \(\mathbf P\) \emph{division} if \((\forall r > 0)(\exists s)~rs = sr = 1\).
    Note that \(s\) is necessarily unique, so we write \(s = r^{-1}\). 
    
    Given a poset \((Y, \le)\), the set \(\D_{\mathbf P}(Y, \le)\) can always be equipped with the \emph{point-wise} order, which simply has \(\theta_1 \le \theta_2\) if and only if \((\forall x)~\theta_1(x) \le \theta_2(x)\).
    Of course, if \(\theta_1 \le \theta_2\), then \(\theta_1 \sqsubseteq \theta_2\).

    \begin{restatable}{theorem}{leastfixedpoints}\label{thm:least fixed-points}
        If \(\mathbf P\) is cancellative, difference ordered, and division, then every finite subprobabilististic unguarded system \(\gamma : (X, =) \to \D_{\mathbf P}((X, =) + (Y, \le))\) has a least solution \(\gamma^\dagger : (X, =) \to \D_{\mathbf P} (Y, \le)\) with respect to the point-wise order. 
        Furthermore, if \(\gamma(x)\) is a probability\footnote{Meaning \(\gamma(x)(X) = 1\).} distribution for each \(x \in X\), then \(\gamma^\dagger(x)\) is as well. 
    \end{restatable}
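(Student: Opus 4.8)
The plan is to treat the system as a linear fixed-point equation over the free module and solve it by Gaussian elimination, inducting on the (finite) number of states. Writing $M_{x x'} = \gamma(x)(x')$ for $x,x' \in X$ and $O_x = \gamma(x)|_Y \in \O_{\mathbf P} Y$, a solution is exactly a family $\sigma : X \to \D_{\mathbf P}(Y,\le)$ with $\sigma(x) = \sum_{x' \in X} M_{x x'}\,\sigma(x') + O_x$ for every $x$, and the subprobabilistic hypothesis reads $\sum_{x'} M_{x x'} + |O_x| \le 1$, where $|\theta| = \sum_y \theta(y)$. Since $\mathbf P$ is positive, this already forces each self-loop weight $a := M_{x_0 x_0}$ to satisfy $a \le 1$. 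The engine of the argument is a scalar lemma: for fixed $R \in \O_{\mathbf P}Y$ and $a \le 1$, the equation $u = a u + R$ has, when $a < 1$, the unique solution $u = (1-a)^{-1} R$; here $1-a$ exists by difference-orderedness and is nonzero, hence invertible by the division hypothesis, existence is a one-line check using $(1-a)(1-a)^{-1} = 1$, and uniqueness follows because $u = a u + (1-a)u$ combined with $u = au + R$ and cancellativity in the module give $R = (1-a)u$. When $a = 1$ the subprobabilistic constraint forces $R = 0$, so the least solution is $u = 0$.

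First I would use this lemma to eliminate one state $x_0$, substituting its solved value into the remaining equations to produce a reduced system on $X \setminus \{x_0\}$ with $M'_{x x'} = M_{x x'} + M_{x x_0}(1-a)^{-1} M_{x_0 x'}$ and $O'_x = O_x + M_{x x_0}(1-a)^{-1} O_{x_0}$ (simply deleting $x_0$ when $a = 1$). The crucial bookkeeping step, and the first place the hypotheses bite, is to check that the reduced system is again subprobabilistic. This is a short computation: from $\sum_{x' \ne x_0} M_{x_0 x'} + |O_{x_0}| \le 1 - a$ and $(1-a)^{-1}(1-a) = 1$ one gets $M_{x x_0}(1-a)^{-1}\bigl(\sum_{x' \ne x_0} M_{x_0 x'} + |O_{x_0}|\bigr) \le M_{x x_0}$, so the new row sum $\sum_{x' \ne x_0} M'_{x x'} + |O'_x|$ is bounded by the old row sum and hence by $1$. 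In particular all coefficients stay in $\mathbf P$, so the induction hypothesis applies and produces a least solution of the reduced system, which I then back-substitute through the scalar lemma to obtain $\gamma^\dagger$.

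For leastness I would strengthen the induction hypothesis to the assertion that $\gamma^\dagger$ is the least \emph{pre-fixed point}, i.e. $\gamma^\dagger \le \tau$ for every $\tau$ with $\tau(x) \ge \sum_{x'} M_{x x'}\tau(x') + O_x$ for all $x$. Given such a $\tau$, the $x_0$-inequality $\tau(x_0) \ge a\,\tau(x_0) + R_\tau$ combines with $\tau(x_0) = a\,\tau(x_0) + (1-a)\tau(x_0)$ and cancellativity to give $(1-a)\tau(x_0) \ge R_\tau$; multiplying by $(1-a)^{-1}$, which is monotone, yields $\tau(x_0) \ge (1-a)^{-1} R_\tau$ when $a < 1$, and trivially $\tau(x_0) \ge 0 = \gamma^\dagger(x_0)$ when $a = 1$. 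Substituting this lower bound back shows $\tau|_{X \setminus \{x_0\}}$ is a pre-fixed point of the reduced system, so the induction hypothesis gives $\gamma^\dagger \le \tau$ off $x_0$, and monotonicity of the back-substitution formula propagates the inequality to $x_0$. As $\gamma^\dagger$ is itself a solution, it is in particular the least one.

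For the final clause I would pass to the \emph{mass system}. Because $\theta \mapsto |\theta|$ is a module homomorphism $\O_{\mathbf P}Y \to \mathbf P$, the masses $m_x = |\gamma^\dagger(x)|$ are the least solution of the scalar system $m_x = \sum_{x'} M_{x x'} m_{x'} + |O_x|$ over $\mathbf P$ itself. When every $\gamma(x)$ is a genuine probability distribution the row sums equal $1$, so the constant family $m_x = 1$ is a fixed point, and the row-sum-preservation computation above shows each elimination step keeps the mass equation balanced, so tracing it through the induction gives $m_x = 1$. I expect this last point to be the main obstacle, since it is precisely here that a coefficient-$1$ self-loop would leak mass (the least mass solution then drops below $1$); the computation confirms that no mass is lost exactly when elimination only ever encounters self-loop weights $a < 1$, that is, when each state can reach $Y$.
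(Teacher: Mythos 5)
Your proof is correct and follows essentially the same route as the paper's: induction on the number of states, solving the self-loop equation $u = au + R$ via $(1-a)^{-1}$ (using division for existence and cancellativity plus difference-ordering for uniqueness/leastness), eliminating one state, and back-substituting. Two points where you go beyond the paper are worth noting. First, you explicitly verify that the reduced system is again subprobabilistic, which the paper silently assumes but which is genuinely needed for the induction hypothesis to apply (the codomain $\D_{\mathbf P}$ carries the mass constraint $\sum \theta(x) \le 1$, and the new coefficients $c_{ij} = a_{ij} + a_{in}ba_{nj}$ must be checked to respect it); your row-sum computation closes a small gap in the paper's argument. Second, you attempt the ``furthermore'' clause about preservation of probability distributions, which the paper's proof never addresses, and your diagnosis is right: the clause fails as stated, since for $\gamma(x_1) = \delta_{x_1}$ (a genuine probability distribution) the least solution is $\gamma^\dagger(x_1) = 0$, which has mass $0$. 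The clause needs an additional reachability hypothesis (every state can reach $Y$), exactly as you observe, so you should not regard your inability to prove it unconditionally as a defect of your argument.
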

    
    \begin{proof}[Proof of \cref{thm:least fixed-points}.]
        Let \(\gamma : (X, =) \to \D_{\mathbf P}((X, =) + (Y, \le))\), \(X = \{x_1, \dots, x_n\}\), and for any \(i,j \le n\) write \(a_{ij} = \gamma(x_i)(x_j)\) and \(\theta_i = \gamma(x_i)\delta_Y\).
        We need to find a least solution to the system of equations
        \begin{align*}
            x_1 &= a_{11}x_1 + \cdots + a_{1n}x_n + \theta_1\\
                &\ \ \vdots \tag{1}\\
            x_n &= a_{n1}x_1 + \cdots + a_{nn}x_n + \theta_n
        \end{align*}
        We proceed by induction on \(n\).
        If \(n = 1\), then the system consists of the single equation \(x_1 = a_{11}x_1 + \theta_1\). 
        If \(a_{11} = 0\), then \(\gamma^\dagger(x_1) = \theta_1\) is the least solution.
        If \(a_{11} = 1\), then again, \(\gamma^\dagger(x_1) = 0\) is the least solution, as \(\theta_1 = 0\).
        Otherwise, \(0 < a_{11} < 1\) and \(1 - a_{11} > 0\). 
        Let \(b \in \mathbf P\) such that \((1 - a_{11})b = 1\), and observe that \(\gamma^\dagger(x_1) = b\theta_1\) is a solution:
        \begin{align*}
            a_{11}b\theta_1 + \theta_1
            = (a_{11}b + 1)\theta_1
            = (a_{11}b + (1-a_{11})b)\theta_1
            = (a_{11}+ (1-a_{11}))b\theta_1
            = b\theta_1
        \end{align*} 
        It is also easy to see that it is the unique solution: if \(\psi\) is another, then \(\psi(x_i) = a_{11}\psi(x_1) + \theta_1\) and therefore \((1 - a_{11})\psi(x_1) = \psi(x_1) - a_{11}\psi(x_1) = \theta_1\). 
        Multiplying both sides by \(b\), 
        \[
            \psi(x_1) = b(1 - a_{11})\psi(x_1) = b\theta_1 = \gamma^\dagger(x_1)
        \]
    
        For the inductive step, consider the \(n\)th equation in (1).
        Without loss of generalisation, assume that \(a_{nn} > 0\).
        If \(a_{nn} = 1\), then \(a_{ni} = 0\) for \(i \neq n\) and \(\theta_n = 0\), and we let \(\gamma^\dagger(x_n) = 0\).
        Substituting \(x_n\) for \(\gamma^\dagger(x_n)\), we see that (1) is equivalent to 
        \begin{align*}
            x_1 &= a_{11}x_1 + \cdots + a_{1(n-1)}x_{n-1} + \theta_1\\
                &\ \ \vdots \\
            x_{n-1} &= a_{(n-1)1}x_1 + \cdots + a_{(n-1)(n-1)}x_{n-1} + \theta_{n-1}
        \end{align*}
        which, by the induction hypothesis has a least solution \(\psi\).
        Setting \(\gamma^\dagger(x_i) = \psi(x_i)\) for \(i < n\) and \(\gamma^\dagger(x_n) = 0\), then \(\gamma^\dagger\) is a solution to (1).  
        If \(\phi\) is another solution to (1), then \(\gamma^\dagger(x_i) \le \phi(x_i)\) for \(i < n\) by the induction hypothesis, and \(\gamma^\dagger(x_n) \le \phi(x_n)\) trivially.  
    
        If \(0 < a_{nn} < 1\), let \(b = (1 - a_{nn})^{-1}\).
        Substituting \(x_n\) for \(ba_{n1}x_1 + \cdots + ba_{n(n-1)}x_{n-1} + b\theta_n\) in (1), expanding, and collecting like terms, we obtain
        \begin{align*}
            x_1 &= c_{11}x_1 + \cdots + c_{1(n-1)}x_{n-1} + (\theta_1 + b\theta_n)\\
                &\ \ \vdots \tag{2}\\
            x_{n-1} 
                &= c_{(n-1)1}x_1 + \cdots + c_{(n-1)(n-1)}x_{n-1} + (\theta_{n-1} + a_{(n-1)n}b\theta_n)
        \end{align*}
        where \(c_{ij} = a_{ij} + a_{in}ba_{nj}\).
        By the induction hypothesis, (2) has a least solution \(\psi\). 
        To obtain a solution to (1), set \(\gamma^\dagger(x_i) = \psi(x_i)\) for \(i < n\) and
        \[
           \gamma^\dagger(x_n) = ba_{n1}\psi(x_1) + \cdots + ba_{n(n-1)}\psi_{n-1} + b\theta_n
        \]
        To see that \(\gamma^\dagger\) is indeed a solution, observe that for \(i < n\), 
        \begin{align*}
            \gamma^\dagger(x_i) 
            &= c_{i1}\gamma^\dagger(x_1) + \cdots + c_{i(n-1)}\gamma^\dagger(x_{n-1}) + (\theta_i + b\theta_n) \\
            &= (a_{i1} + a_{in}ba_{n1})\gamma^\dagger(x_1) + \cdots + (a_{i(n-1)} + a_{in}ba_{n{n-1}})\gamma^\dagger(x_{n-1}) + (\theta_i + b\theta_n) \\
            &= a_{i1}\gamma^\dagger(x_1) + \cdots + a_{i(n-1)}\gamma^\dagger(x_{n-1}) + a_{in}\gamma^\dagger(x_n) + \theta_i 
        \end{align*}
        as well as 
        \begin{align*}
            (1 - a_{nn})\gamma^\dagger(x_n) 
            &= (1 - a_{nn})ba_{n1}\gamma^\dagger(x_1) + \cdots + (1 - a_{nn})ba_{n(n-1)}\gamma^\dagger(x_{n-1}) + (1 - a_{nn})b\theta_n \\
            &= a_{n1}\gamma^\dagger(x_1) + \cdots + a_{n(n-1)}\gamma^\dagger(x_{n-1}) + \theta_n \\
            \gamma^\dagger(x_n)
            &= a_{n1}\gamma^\dagger(x_1) + \cdots + a_{n(n-1)}\gamma^\dagger(x_{n-1}) + a_{nn}\gamma^\dagger(x_n) + \theta_n 
        \end{align*}
        To see that it is the least solution, let \(\phi\) be any solution to (1), and observe that this implies
        \[
            \phi(x_n) = ba_{n1}\phi(x_1) + \cdots + ba_{n(n-1)}\phi(x_{n-1}) + b\theta_1
        \]
        and for \(i < n\),
        \[
            \phi(x_i)
            = c_{i1}\phi(x_1) + \cdots + c_{i(n-1)}\phi(x_{n-1}) + (\theta_i + b\theta_n)
        \]
        This means \(\phi\) is a solution to (2), so \(\gamma^\dagger(x_i) = \psi(x_i) \le \phi(x_i)\) for each \(i < n\) by the induction hypothesis. 
        Finally,
        \begin{align*}
            \gamma^\dagger(x_n) 
            &= ba_{n1}\gamma^\dagger(x_1) + \cdots + ba_{n(n-1)}\gamma^\dagger(x_{n-1}) + b\theta_n\\
            &= ba_{n1}\psi(x_1) + \cdots + ba_{n(n-1)}\psi(x_{n-1}) + b\theta_n\\
            &\le ba_{n1}\phi(x_1) + \cdots + ba_{n(n-1)}\phi(x_{n-1}) + b\theta_n \\
            &= \phi(x_n) 
        \end{align*}
        This shows that \(\gamma^\dagger\) is the least solution to (1) in the point-wise order.
    \end{proof}

    \begin{example}
        Consider the ordered semiring \(\mathbf R^n_> = (R, <^*, \vec 0, \vec 1, +, \cdot)\), where 
        \[
            R = \{(r_1, \dots, r_n) \in (\mathbb R^+)^n \mid r_i = 0 \implies (\forall j)~r_j = 0\}
        \]
        where \(\vec r <^* \vec s\) iff either \(\vec r = \vec s\) or \(r_i < s_i\) for all \(i \le n\).
        Then \(\mathbf R^n_>\) is division, and furthermore satisfies the conditions of \cref{thm:least fixed-points}. 
        An unguarded system \((X, =) \to \D_{\mathbf R^n_>}((X, =), (Y, \le))\) can be thought of as \(n\) simultaneous unguarded systems \((X, =) \to \D_{\mathbf R^+}((X, =)+(Y, \le))\).
    \end{example}

    Buried in the proof of \cref{thm:least fixed-points} is a proof of the following additional fact: if \(0 < \gamma(x_i)(x_j) < 1\) for all \(x_i,x_j \in X\), then \(\gamma\) has a \emph{unique} solution. 
    This somewhat trivially implies that if \(\mathbf P\) is cancellative, difference ordered, and division, then every finite subprobabilististic unguarded SDP has a least solution with respect to the heavier-higher order as well.
    
    A different approach shows that divisibility is not necessary if we assume that \(\mathbf P\) is sufficiently complete.
    An ordered algebra is called \emph{\(\omega\)-continuous} if every chain \(x_1 \le x_2 \le \dots\) has a least upper bound and its algebraic operations preserve least upper bounds of chains. 

    \begin{theorem}
        Let \(\mathbf P\) be either cancellative and difference-ordered or idempotent. 
        If \(\mathbf P\) is \(\omega\)-continuous, then every unguarded system has a least solution.
    \end{theorem}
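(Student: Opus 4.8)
The plan is to recognise the least solution as the least fixed point of a one-step operator and to obtain it by Kleene iteration. Writing \(a_{ij} = \gamma(x_i)(x_j)\) and \(\theta_i = \gamma(x_i)\delta_Y\) as in the proof of \cref{thm:least fixed-points}, a solution is exactly a fixed point of the operator \(\Gamma\) on \((\D_{\mathbf P}Y)^X\) given by \(\Gamma(\psi)(x_i) = \sum_j a_{ij}\psi(x_j) + \theta_i\) (the inner sum is finite because each \(\gamma(x_i)\) has finite support). The obstacle to taking suprema directly in \(\D_{\mathbf P}Y\) is that its members must have finite support, which an infinite chain need not respect. I would sidestep this by setting \(Y_0 = \bigcup_i \supp(\theta_i)\), a finite subset of \(Y\) (here I use finiteness of the state space), and noting that \(\Gamma\) preserves the subposet of those \(\psi\) with each \(\psi(x_i)\) supported on \(Y_0\): this is immediate from the formula, and the constant zero family \(\mathbf 0\) lies in it, so every Kleene iterate \(\Gamma^k(\mathbf 0)\) stays inside \((\D_{\mathbf P}Y_0)^X\).

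Next I would verify that \((\D_{\mathbf P}Y_0)^X\) is an \(\omega\)-cpo with bottom and that \(\Gamma\) is \(\omega\)-continuous on it. Since \(Y_0\) is finite, \(\D_{\mathbf P}Y_0\) sits inside the finite power \(P^{Y_0}\) with pointwise operations and order; because \(\mathbf P\) is \(\omega\)-continuous, every chain in \(P^{Y_0}\) has a pointwise least upper bound and \(+,\cdot\) preserve it. The subprobability constraint survives passage to suprema: along a chain with \(\sum_y \theta_k(y) \le 1\) one has \(\sum_y \bigl(\bigsqcup_k \theta_k\bigr)(y) = \bigsqcup_k \sum_y \theta_k(y) \le 1\) by finiteness of \(Y_0\) and \(\omega\)-continuity of \(+\), with \(1\) bounding the chain. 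Thus \(\D_{\mathbf P}Y_0\) is an \(\omega\)-continuous ordered \(\mathbf P\)-module with bottom \(\mathbf 0\), and so is the finite power \((\D_{\mathbf P}Y_0)^X\). Monotonicity and \(\omega\)-continuity of \(\Gamma\) then follow because each \(\Gamma(\psi)(x_i)\) is built from \(\psi\) using only \(+\) and scalar multiplication, which are monotone and preserve chain-suprema; a short computation using \(\sum_j a_{ij} + \theta_i(Y) \le 1\) also shows that \(\Gamma\) indeed lands in subdistributions.

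With these facts in hand, Kleene's fixed-point theorem produces the least fixed point \(\gamma^\dagger = \bigsqcup_{k \ge 0}\Gamma^k(\mathbf 0)\) of \(\Gamma\) in \((\D_{\mathbf P}Y_0)^X\), which is in particular a solution. To see that it is the least solution in all of \((\D_{\mathbf P}Y)^X\) I would use that \(\mathbf 0\) is the global bottom: for any solution \(\phi\) we have \(\mathbf 0 \le \phi\) pointwise, hence \(\Gamma^k(\mathbf 0) \le \Gamma^k(\phi) = \phi\) by monotonicity for every \(k\), and therefore \(\gamma^\dagger \le \phi\). This covers solutions whose support strays outside \(Y_0\) as well.

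Two remarks on the structure of the argument. The assumptions that \(\mathbf P\) is cancellative and difference-ordered or idempotent are precisely the hypotheses of \cref{thm:main}, and their role here is only to guarantee that \(\bar\D_{\mathbf P}\) presents the free ordered \(\mathbf P\)-module, so that the notion of solution and the operations on \(\D_{\mathbf P}Y_0\) are the intended ones; the fixed-point construction itself is uniform across both cases and uses nothing beyond \(\omega\)-continuity of \(\mathbf P\). The one genuine difficulty is the clash between the finite-support requirement built into \(\D_{\mathbf P}\) and the infinite suprema demanded by Kleene iteration, and restricting to the finite output alphabet \(Y_0\) is exactly what reconciles them. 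Finally, the same \(\gamma^\dagger\) is automatically least with respect to the heavier-higher order \(\sqsubseteq\), since \(\theta_1 \le \theta_2\) implies \(\theta_1 \sqsubseteq \theta_2\).
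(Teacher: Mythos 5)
Your proposal is correct and follows essentially the same route as the paper: both recognise solutions as fixed points of the one-step affine operator, both identify the clash between Kleene iteration and the finite-support constraint on \(\D_{\mathbf P}\), and both resolve it by confining the iterates to the finite union of the supports of the \(\theta_i\) before taking pointwise suprema. Your packaging of that restriction as an invariant sub-\(\omega\)-cpo \((\D_{\mathbf P}Y_0)^X\) is a slightly tidier way of saying what the paper says via stabilisation of supports, but it is not a different argument.
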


    \begin{proof}
        This is nearly an application of the Kleene fixed-point theorem~\cite{kleene1952fpt}.
        If we let 
        \[
            L : {\bar\D_{\mathbf P}((X, =) + (Y, \le))^n} \to {\bar\D_{\mathbf P}((X, =) + (Y, \le))^n}
            \qquad
            L\begin{bmatrix}
                x_1 \\ \vdots \\x_n
            \end{bmatrix}  
            =\begin{bmatrix}
            a_{11}x_1 + \cdots + a_{1n}x_n + \theta_1\\
            \vdots\\
            a_{n1}x_1 + \cdots + a_{nn}x_n + \theta_n
            \end{bmatrix}
        \]
        as in (1), and \(\mathbf P\) be \(\omega\)-continuous, then by monotonicity and point-wise \(\omega\)-continuity of \(L\), we just need to take the least upper bound of \(L^n(\vec 0)\) to find the least solution of (1).
        There is a small snag in this approach: even if \(\mathbf P\) is continuous, \(\bar\D_{\mathbf P}(X, \le)\) is usually not.
        Consider, for example, \(X = \mathbb N\), and the chain defined by \(\psi_n = \delta_n + \sum_{m < n} \delta_m\). 
        Any upper bound to \(\{\psi_n \mid n \in \mathbb N\}\) must have infinite support, and so will lie outside of \(\bar\D_{\mathbf P}(\mathbb N, \le)\).
        
        On the other hand, the support of the \(i\)th component of \(L^n(\vec 0)\) is contained in the union \(U \subseteq Y\) of the supports of the \(\theta_j\).
        Since there are finitely many \(\theta_j\) to consider, \(U\) is finite.
        Now, \(L\) is monotone, so there is an \(N\in \N\) such that the support of the \(i\)th component of \(L^m(\vec 0)\) is equal to the support \(U_i\) of the \(i\)th component of \(L^N(\vec 0)\) for all \(n > N\).

        For each \(y \in U_i\), let \(\psi_i(y) = \sup (L^n(\vec 0))_i(y)\) and \(\psi_i(x) = 0\) for each \(x \notin U_i\). 
        Then \(\psi_i\) is a subprobability distribution because \(L^n(\vec 0)_i\) is for each \(n \in \mathbb N\). 
        Also, a routine check reveals that \(\vec \psi\) is the least upper bound of \(L^n(\vec 0)\).
        To see that \(\vec \psi\) is the least fixed-point, use continuity to show that \(L(\vec \psi) \sqsubseteq \vec \psi\).
        For any other \(\vec\gamma\) such that \(L(\vec \gamma) \sqsubseteq \vec \gamma\), \(L^n(\vec 0)_i(y) \le \gamma_i(y)\) for all \(n > N\) and \(y \in U_i\).
        Therefore \(\vec \psi \sqsubseteq \vec \gamma\), so indeed \(\vec\psi\) is a least fixed-point of \(L\). 
    \end{proof}

	\printbibliography

\end{document}